\newtheorem{defn0}{Definition}[section]
\newtheorem{prop0}[defn0]{Proposition}
\newtheorem{thm0}[defn0]{Theorem}
\newtheorem{lemma0}[defn0]{Lemma}
\newtheorem{corollary0}[defn0]{Corollary}
\newtheorem{example0}[defn0]{Example}
\newtheorem{remark0}[defn0]{Remark}
\newtheorem{conjecture0}[defn0]{Conjecture}
\newenvironment{definition}{\medskip \begin{defn0}}{\end{defn0}}
\newenvironment{proposition}{\medskip \begin{prop0}}{\end{prop0}}
\newenvironment{theorem}{\medskip \begin{thm0}}{\end{thm0}}
\newenvironment{lemma}{\medskip \begin{lemma0}}{\end{lemma0}}
\newenvironment{corollary}{\medskip \begin{corollary0}}{\end{corollary0}}
\newenvironment{example}{\medskip \begin{example0}\rm}{\end{example0}}
\newenvironment{remark}{ \medskip\begin{remark0}\rm}{\end{remark0}}
\newenvironment{conjecture}{\medskip\begin{conjecture0}}{\end{conjecture0}}
\def\thetime{\timehour=\time
\divide\timehour by60 \minleft=\timehour \multiply\minleft by -60
\advance\minleft by\time \ifnum\time>720\advance\timehour
by-12\fi\relax
\number\timehour:\ifnum\minleft<10 
    0\fi\relax\number\minleft
    \ifnum\time>720~pm \else~am\fi}
\newcommand{\m}{\mathfrak m}
\def\n{{\mathfrak n}}                   
\def\res{{\bf k}}
\def\gcl{\operatorname{gcl}}
\def\gr{\operatorname{gr}}
\def\HF{\operatorname{HF}}
\def\Syz{\operatorname{Syz}}
\def\Hilb{\operatorname{Hilb}}
\newcommand{\h}{\operatorname{ht}}
\newcommand{\sgn}{\operatorname{sgn}}
\def\ord{{\mathrm{ ord}}}
\def\ch{{\mathrm{ char}}}
\def\supp{{\mathrm{Supp}}}
\def\otau{{\overline{\tau}}}
\newcommand{\Lt}{\operatorname{Lt_{\overline{\tau}}}}
\newcommand{\LC}{\operatorname{LC_{\overline{\tau}}}}
\newcommand{\LF}{\operatorname{LF_{(\overline{\tau},\mathcal{F})}}}
\newcommand{\Ltlex}{\operatorname{Lt_{lex}}}
\newcommand{\Ltau}{\operatorname{Lt_{\tau}}}
\newcommand{\Ltdeg}{\operatorname{Lt_{deglex}}}
\newcommand{\lex}{\operatorname{lex}}
\newcommand{\Lex}{\operatorname{Lex}}
\newcommand{\Gin}{\operatorname{Gin}}
\newcommand{\deglex}{\operatorname{deglex}}
\newcommand{\tail}{\operatorname{tail}}
\newcommand{\rk}{\operatorname{rk}}
\newcommand{\I}{\mathcal{I}}
\begin{document}

\title[]{{\bf
Canonical Hilbert-Burch matrices for power series}}

\author[Roser Homs]{Roser Homs}
\thanks{
\rm \indent 2010 MSC: Primary 13D02, 14C05; Secondary 13P10, 13F25, 13H10. \\
\rm \indent Keywords: Hilbert-Burch matrices, Artin rings, Gr\"obner cells, local term ordering.
}

\address{Roser Homs
\newline \indent Technische Universit\"at M\"unchen, Parkring 13, 85748 Garching bei M\"unchen, Germany
}  \email{{\tt roser.homs@tum.de}}

\author[Anna-Lena Winz]{Anna-Lena Winz}

\address{Anna-Lena Winz
\newline \indent Freie Universit\"at Berlin, Arnimallee 3, 14195 Berlin, Germany
}  \email{{\tt  anna-lena.winz@fu-berlin.de}}

\begin{abstract} 

Sets of zero-dimensional ideals in the polynomial ring $\res[x,y]$ that share the same leading term ideal with respect to a given term ordering are known to be affine spaces called Gr\"obner cells. Conca-Valla and Constantinescu parametrize such Gr\"obner cells in terms of certain canonical Hilbert-Burch matrices for the lexicographical and degree-lexicographical term orderings, respectively. 

In this paper, we give a parametrization of $(x,y)$-primary ideals in Gr\"obner cells which is compatible with the local structure of such ideals. More precisely, we extend previous results to the local setting by defining a notion of canonical Hilbert-Burch matrices of zero-dimensional ideals in the power series ring $\res[\![x,y]\!]$ with a given leading term ideal with respect to a local term ordering.
\end{abstract}


\maketitle

\section{Introduction}

Punctual Hilbert schemes $\Hilb^d\left(\res[\![x_1,\dots,x_n]\!]\right)$ parametrize points of multiplicity $d$ at the origin. Its counterpart in commutative algebra are local Artinian $\res$-algebras of length $d$. 
These objects have been widely studied in the literature, see \cite{Bri77},\cite{Iar77},\cite{Poo08a}.
By Cohen's structure theorem, any such algebra is of the form $R/J$, where $R=\res[\![x_1,\dots,x_n]\!]$ denotes the power series ring and $J$ is a primary ideal over $\m=(x_1,\dots,x_n)$. 
Moreover, we have the isomorphism $R/J\simeq P/I$, where $I=J\cap P$ is an $\m$-primary ideal in the polynomial ring $P=\res[x_1,\dots,x_n]$. We denote by $\m$ both the unique maximal ideal of $R$ and the homogeneous maximal ideal of $P$, since it will be clear from the context.

We focus on the codimension 2 case, namely $n=2$. Let $E$ be an $\m$-primary monomial ideal. Fix a term ordering $\tau$ on the polynomial ring $P=\res[x,y]$ and denote by $\Ltau(I)$ the leading term ideal of $I$ (see \Cref{def:lt}). 
The sets of zero-dimensional ideals $V_\tau(E)=\lbrace I\subset P: \Ltau(I)=E\rbrace$ are known to be affine spaces. This has been proved for particular term orderings in \cite{Bri77}, \cite{Iar77} and the general case follows from a result by Bia\l{}ynicki-Birula in \cite{Bia73}.
By analogy to Schubert cells in Grassmanians, the affine varieties $V_\tau(E)$ are called Gr\"obner cells.

In \cite{CV08}, Conca and Valla provide a parametrization of Gr\"obner cells in terms of canonical Hilbert-Burch matrices (see \Cref{canonical}) for the lexicographical term ordering. In \cite{Con11}, Constantinescu analogously parametrizes $V_\tau(E)$ in the case of the degree lexicographical term ordering when $E$ is a lex-segment ideal (see (\ref{eq:lex})).

When we restrict our attention to $\m$-primary ideals of $V_\tau(E)$, a major drawback of this construction is that taking leading term ideals is not compatible with the local structure of $A=P/I$ in the sense that Hilbert functions are not preserved. More precisely, for any local ring $(A,\n)$ the Hilbert function of $A$ is defined as the Hilbert function of its associated graded ring $\gr_{\n}(A)=\bigoplus_{t\geq 0}\n^t/\n^{t+1}$. We recall that $\gr_{\n}(A)\simeq P/I^\ast$ (see \cite[Proposition 5.5.12]{GP08}), where $I^\ast$ denotes the initial ideal of $I$ (see \Cref{def:initialIdeal}). Since in general $\Ltau(I)\neq\Ltau(I^\ast)$, the Hilbert function of $P/I$ does not necessarily match the Hilbert function of $P/\Ltau (I)$ (see \Cref{ex:1}). In other words, ideals with different Hilbert functions may belong to the same Gr\"obner cell (see \Cref{ex:CV-vs-local}). 

We can overcome this problem by working in the power series ring. The notion of monomial term ordering is also applicable to a power series ring via local term orderings $\otau$ induced by usual term orderings $\tau$ in the polynomial ring, see \Cref{SS:local_ordering}. We now have the equality $\Lt(J)=\Ltau(J^\ast)$ (see \Cref{rk:HF}), hence Hilbert functions are preserved by taking leading term ideals with respect to local term orderings.

This paper is devoted to the extension of the results by Conca-Valla and Constantinescu to the local setting. Although local term orderings are no longer well-orderings, there are analogous tools to Gr\"obner bases and Buchberger algorithm in the ring of formal power series and in localizations of polynomial rings: standard bases and the tangent cone algorithm, see \cite{Hir64},\cite{Mor82},\cite{GP08} and \Cref{S:local}. 
The resulting Gr\"obner cells $V(E)=\lbrace J\subset R: \Lt(J)=E\rbrace$ are indeed compatible with the local structure.

Combining \Cref{Param} and \Cref{polysurj}, we provide a surjection from a certain set of Hilbert-Burch matrices to the affine variety $V(E)$.
Our main result, \Cref{ParamLex}, gives a parametrization of the ideals in $V(E)$ in terms of their canonical Hilbert-Burch matrices for a special class of monomial ideals $E$. This class includes all lex-segment ideals. By Macaulay's theorem (see \cite{Mac27}; for a modern treatment, see \cite[Theorem 2.9]{Abe12}), any admissible Hilbert function $h$ can be realized by a lex-segment ideal $\Lex(h)$.
In fact, in characteristic zero, $V(\Lex(h))$ parametrizes all ideals with Hilbert function $h$ up to a generic change of coordinates, see \Cref{Cor:hilbertfunction}.
Plenty of examples are given to illustrate the behavior.

In this way, we also generalize the procedure given by Rossi and Sharifan in \cite[Remark 4.7, Example 4.8]{RS10} to explicitly realize ideals with any admissible sequence of zero and negative cancellations on the minimal free graded resolution of $\res[x,y]/\Lex(h)$. In \cite{RS10} the authors considered very specific deformations of a Hilbert-Burch matrix of $\Lex(h)$, in the present paper we parametrize all possible deformations.

Finally, in \Cref{Conj} we point out what should be the set of matrices giving a parametrization of the Gröbner cell $V(E)$ when we drop the lex-segment assumption on $E$. An interesting application of a full parametrization is the computation of all Gorenstein rings that are at a minimal distance of a given Artin ring, see \Cref{S:Gorenstein} and \cite{EHM20}.

\section{Parametrization of ideals in $\res[x,y]$}\label{S:polys}

In the present section we review the parametrization of Gr\"obner cells in $P=\res[x,y]$ in terms of Hilbert-Burch matrices given by Conca-Valla in \cite{CV08} and Constantinescu in \cite{Con11}. 
Let $\res$ be an arbitrary field. Given a monomial term ordering \(\tau\) on a polynomial ring \(P\) over \(\res\) and an ideal \(I \subset P\), the leading term ideal of \(I\) is defined as follows. 

\begin{definition}\label{def:lt}
The \textbf{leading term ideal} $\Ltau(I)$ of the ideal $I\subset P$ with respect to a monomial term ordering $\tau$ in the polynomial ring is the monomial ideal generated by all leading terms of elements in $I$, i.e. $\Ltau(I)=\langle \Ltau(f):f\in I\rangle$.
\end{definition}

In the present paper, we will consider the lexicographical term ordering ($\lex$) and the degree-lexicographical term ordering ($\deglex$) in $P=\res[x,y]$. Recall that with the former we first compare the exponents of $x$ of two monomials, whereas with the latter we first compare their degree. Note that in a polynomial ring in two variables the lexicographical term ordering is equivalent to the reverse lexicographical term ordering.

Consider a monomial zero-dimensional ideal $E$ in $P$. By taking the smallest integer $t$ such that $x^t\in E$ and the smallest integers $m_i$ such that $x^{t-i}y^{m_i}\in E$ for any $1\leq i\leq t$, we can always express such a monomial ideal as

\begin{equation}\label{eq:lex}
  E=(x^t,x^{t-1}y^{m_1},\dots,x^{t-i}y^{m_i},\dots,y^{m_t}),  
\end{equation}

\noindent where $0=m_0< m_1\leq\dots\leq m_t$ is an increasing sequence. 
If all the inequalities are strict, we call \(E\) a \textbf{lex-segment ideal}.

After fixing a term order, we can ask for all ideals $I$ in $P$ with leading term ideal $E$. Reduced Gr\"obner bases provide a parametrization of this set of ideals. However, explicitly describing such a parametrization is not always straightforward. In \cite{CV08}, Conca and Valla consider a different approach: instead of focusing on the generators of $I$, they study the relations or syzygies among the generators. A Hilbert-Burch matrix of the ideal $I$ encodes these relations. Therefore, giving such a parametrization is equivalent to choosing a canonical Hilbert-Burch matrix for each ideal $I$. 

\begin{definition}\label{canonical}
The \textbf{canonical Hilbert-Burch matrix}\index{canonical Hilbert-Burch matrix} of the monomial ideal $E=(x^t,\dots,x^{t-i}y^{m_i},\dots,y^{m_t})$ is the Hilbert-Burch matrix of $E$ of the form

$$H=\left(\begin{array}{cccc}
y^{d_1} & 0 & \cdots & 0\\
-x & y^{d_2} & \cdots & 0\\
0 & -x & \cdots & 0\\
\vdots & \vdots & & \vdots\\
 0 & 0 & \cdots & y^{d_t}\\
0 & 0 & \cdots & -x
\end{array}\right),$$
\noindent
where $d_i=m_i-m_{i-1}$ for any $1\leq i\leq t$. 
The \textbf{degree matrix}\index{degree matrix} $U$ of $E$ is the $(t+1)\times t$ matrix with integer entries $u_{i,j}=m_j-m_{i-1}+i-j$, for $1\leq i\leq t+1$ and $1\leq j\leq t$.
\end{definition}

It follows from the definition that $u_{i,i}=d_i$ and $u_{i+1,i}=1$, for $1\leq i\leq t$.

\medskip

Conca-Valla parametrize the set \(V_0(E)\) of all zero-dimensional ideals $I$ in $P$ that share the same leading term ideal $E$ with respect to the lexicographical term ordering. 
They give a set of matrices that deform the canonical Hilbert-Burch matrix of the monomial ideal $E$ into Hilbert-Burch matrices of each $I$. We use the same notation as in \cite{CV08}.

\begin{definition}\label{def:setT2}
We denote by $T_0(E)$ the set of matrices $N=(n_{i,j})$ of size $(t+1)\times t$ with entries in $\res[y]$ such that 
\begin{itemize}
\item $n_{i,j}=0$ for any $i<j$, 
\item $\deg(n_{i,j})<d_j$ for any $i\geq j$.
\end{itemize}
\end{definition}

\begin{theorem}\label{ThCV08}\cite[Theorem 3.3, Corollary 3.1]{CV08} Given a zero-dimensional monomial ideal $E$ in $P=\res[x,y]$ with canonical Hilbert-Burch matrix $H$, the map
$$\begin{array}{rrcl}
\Phi: & T_0(E) & \longrightarrow & V_0(E)\\
& N & \longmapsto & I_t(H+N)
\end{array}$$

\noindent 
is a bijection. 
\end{theorem}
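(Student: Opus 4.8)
The plan is to verify three things: that $\Phi$ is well-defined (i.e.\ $I_t(H+N)$ really lies in $V_0(E)$ for every $N\in T_0(E)$), that $\Phi$ is injective, and that $\Phi$ is surjective. For well-definedness, note first that adding $N$ to $H$ does not change the column-degrees in the degree matrix sense: by \Cref{canonical} the $j$-th column of $H$ has its ``pivot'' $y^{d_j}$ in row $j$ and the entry $-x$ in row $j+1$, and the condition $\deg(n_{i,j})<d_j$ together with $n_{i,j}=0$ for $i<j$ guarantees that the maximal minors of $H+N$ have the same leading terms (with respect to lex) as those of $H$. Concretely, expanding $I_t(H+N)$ along minors obtained by deleting one row, the minor $f_i$ obtained by deleting row $i$ has leading term exactly $x^{t-i+1}y^{m_{i-1}}\cdot(\text{unit})$ — i.e.\ the generator $x^{t-i}y^{m_i}$ of $E$ up to the relabeling in \Cref{canonical} — because the off-diagonal perturbations are of strictly lower $y$-degree in each column. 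Hilbert–Burch (the ideal of maximal minors of a $(t{+}1)\times t$ matrix whose minors form a regular sequence, or here simply have the right codimension) then shows $I_t(H+N)$ is a codimension-two Cohen–Macaulay ideal, hence zero-dimensional in $P=\res[x,y]$, and its leading term ideal is $E$. This uses that $\dim_\res \res[y]/(y^{d_j})=d_j$, so the number of free parameters $\sum_j d_j = m_t$ matches, which is consistent but not yet a proof of bijectivity.

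For injectivity, suppose $\Phi(N)=\Phi(N')$, so $I_t(H+N)=I_t(H+N')=:I$. The key point is that the columns of $H+N$ and of $H+N'$ both give the syzygy matrix in the Hilbert–Burch resolution of $I$, and any two Hilbert–Burch matrices of the same ideal differ by an invertible change of basis on the source; but the normalization built into $T_0(E)$ (the $-x$ just below each pivot, the pivot $y^{d_j}$ being monic of the prescribed degree, and the strict degree bound $\deg n_{i,j}<d_j$ on all other entries) rigidifies this ambiguity. The cleanest route is to recover each column of the syzygy matrix canonically from $I$: the $j$-th column, read as a relation $\sum_i p_i(x,y)\,g_i=0$ among the ordered minimal generators $g_1,\dots,g_{t+1}$ of $I$, is the unique such relation whose $i$-th coordinate is reduced modulo the degree constraints; uniqueness of reduced expressions then forces $N=N'$. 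I would phrase this by comparing $H+N$ and $H+N'$ directly: their difference $N-N'$ is a syzygy matrix whose columns lie in the column space, and the triangular shape plus the degree bounds leave only the zero matrix.

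For surjectivity — which I expect to be the main obstacle — take $I\in V_0(E)$. Its reduced Gr\"obner basis has $t+1$ elements $g_1,\dots,g_{t+1}$ with leading terms the minimal generators of $E$. One builds a Hilbert–Burch matrix for $I$ from the $S$-polynomial relations (Schreyer's theorem gives a syzygy basis from the Gr\"obner basis, and in two variables the consecutive $S$-pairs $S(g_i,g_{i+1})$ already suffice because of the staircase shape), producing a $(t+1)\times t$ matrix $M$ whose $j$-th column encodes the reduction of $S(g_j,g_{j+1})$ to zero. Then one must massage $M$ into the normal form $H+N$ with $N\in T_0(E)$: clearing the leading coefficients to make the pivots monic equal to $y^{d_j}$, using column operations to put the sub-pivot entries into the form $-x$, and using the division algorithm in $\res[y]$ to reduce every other entry modulo $y^{d_j}$ so that $\deg n_{i,j}<d_j$. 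One has to check that these reductions can be carried out without disturbing the already-normalized part of the matrix and without changing the ideal of minors — this is where the lexicographic ordering and the precise combinatorics of the degree matrix $U$ are used, and it is the step that genuinely requires the hypotheses rather than just bookkeeping. Since Conca–Valla already prove this, I would at the end simply invoke \cite[Theorem 3.3, Corollary 3.1]{CV08}; the sketch above is how one reconstructs their argument.
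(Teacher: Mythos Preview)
The paper does not prove Theorem~\ref{ThCV08} at all: it is simply quoted as a known result from \cite[Theorem~3.3, Corollary~3.1]{CV08}, with no argument given. Your proposal therefore goes strictly beyond what the paper does, and since you yourself conclude by invoking \cite{CV08}, you end up in the same place.

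That said, a few comments on your sketch. Your injectivity argument --- that the columns of $N-N'$ are syzygies of a system of generators whose leading terms involve pairwise distinct powers of $x$, forcing $N-N'=0$ --- is exactly the argument the present paper reproduces later in Remark~\ref{miniinj} (attributed there to \cite[3.2]{Con11}), so that part is solid and self-contained. Your well-definedness sketch has the right idea but a small indexing slip (deleting row $i+1$ gives leading term $x^{t-i}y^{m_i}$, not $x^{t-i+1}y^{m_{i-1}}$); more substantively, you should also say why the minors form a \emph{Gr\"obner basis} of the ideal they generate, not just that each minor has the expected leading term --- this is the analogue of what the paper does carefully in the local case via Bertella's criterion (Theorem~\ref{1.11}) in Lemma~\ref{well}. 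For surjectivity, your outline via Schreyer syzygies followed by column/row reductions into the normal form of $T_0(E)$ is indeed the shape of Conca--Valla's argument, and your honest acknowledgment that the reduction step ``genuinely requires the hypotheses rather than just bookkeeping'' is apt; the paper's own later use of reduction moves in Proposition~\ref{CVmatrix} illustrates precisely this mechanism.
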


This theorem allows us to define the canonical Hilbert-Burch matrix of any zero-dimensional ideal $I$ of $P$ as $H+\Phi^{-1}(I)$, where $H$ is the canonical Hilbert-Burch matrix of the monomial ideal $\Ltlex(I)$.

\medskip

In \cite{Con11}, Constantinescu parametrizes the variety 
$$V_{\deglex}(E)=\lbrace I\subset P:\Ltdeg(I)=E\rbrace,$$ 
where the leading term ideals are considered with respect to the degree-lexicographical term ordering, for $E$ lex-segment ideal. 

\begin{definition}
Denote by $\mathcal{A}(E)$ the set of $(t+1)\times t$ matrices $A=(a_{i,j})$
with entries in $\res[y]$ such that all its entries satisfy
\[\deg(a_{i,j})\leq \begin{cases}
\min(u_{i,j}+1, d_i-1), & i\leq j;\\
\min(u_{i,j},d_j-1), & i > j;
\end{cases}\]
\noindent
and $u_{i,j}$ are the entries of the degree matrix $U$ of $E$.
\end{definition}

\begin{theorem}\label{ThCon11}\cite[Theorem 3.1]{Con11} Given a zero-dimensional lex-segment ideal $L$ in $P=\res[x,y]$ with canonical Hilbert-Burch matrix $H$, the map
$$\begin{array}{rrcl}
\Phi: & \mathcal{A}(L) & \longrightarrow & V_{\deglex}(L)\\
& A & \longmapsto & I_t(A+H)
\end{array}$$

\noindent 
is a bijection.
\end{theorem}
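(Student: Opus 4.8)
The plan is to follow the Hilbert-Burch strategy of Conca-Valla, adapting the bookkeeping from the lexicographic to the degree-lexicographic order; concretely I would prove separately that $\Phi$ is well defined, surjective and injective.

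For well-definedness, fix $A\in\mathcal A(L)$, set $M=H+A$ and $I=I_t(M)$, and write $\Delta_i$ for the signed maximal minor of $M$ obtained by deleting row $i$. Expanding the determinant, the ``staircase'' term of $M$ contributes $\pm\bigl(\prod_{k<i}y^{d_k}\bigr)\bigl(\prod_{k\ge i}(-x)\bigr)=\pm\,x^{t-i+1}y^{m_{i-1}}$, the $i$-th generator of $L$; the entire role of the degree bounds defining $\mathcal A(L)$ is to force every other monomial occurring in $\Delta_i$ to be strictly $\deglex$-smaller than $x^{t-i+1}y^{m_{i-1}}$ --- either of lower total degree (this is where the $u_{i,j}$ and $d_i-1$ bounds enter) or of the same total degree but lexicographically smaller --- so that $\Ltdeg(\Delta_i)=x^{t-i+1}y^{m_{i-1}}$ and hence $L\subseteq\Ltdeg(I)$. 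For $\codim I=2$ one notes that any common factor $g$ of $\Delta_0,\dots,\Delta_t$ satisfies $\Ltdeg(g)\mid\Ltdeg(\Delta_i)$ for all $i$, so $\Ltdeg(g)$ divides the greatest common divisor of the generators of $L$, which is $1$; thus $g$ is a unit, $I_t(M)$ has codimension $2$, and the Buchsbaum-Eisenbud complex on $M$ resolves $P/I$. Finally a degeneration argument --- scaling $A$ to $sA\in\mathcal A(L)$ gives a one-parameter family with special fibre $L$ along which $\dim_\res P/I_t(H+sA)$ is constant, equivalently $\Delta_0,\dots,\Delta_t$ form a standard basis --- upgrades the inclusion to $\Ltdeg(I)=L$, so $\Phi(A)\in V_{\deglex}(L)$.

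For surjectivity, given $I\in V_{\deglex}(L)$ I would use the standard basis $f_0,\dots,f_t$ with $\Ltdeg(f_i)=x^{t-i}y^{m_i}$, so $I=(f_0,\dots,f_t)$; since $I$ is Artinian and proper, $\operatorname{pd}_P(P/I)=2$, the kernel of $P^{t+1}\to I$ is free of rank $t$, and Hilbert-Burch yields $0\to P^t\xrightarrow{M}P^{t+1}\to P\to P/I\to 0$ with $I=I_t(M)$ (the multiplier is a unit since $\codim I=2$). Then I would normalize $M$: replace the $f_i$ by the unique reduced standard basis, transport the explicit syzygies $y^{d_j}e_{j-1}-xe_j$ of $L$ to $I$, and apply invertible row and column operations to $M$ --- a degree-controlled elimination clearing every entry away from the prescribed shape --- to reach $M=H+A$, checking that the leading-term and reducedness constraints on the $f_i$ translate into exactly $\deg a_{i,j}\le\min(u_{i,j}+1,d_i-1)$ for $i\le j$ and $\deg a_{i,j}\le\min(u_{i,j},d_j-1)$ for $i>j$, i.e.\ $A\in\mathcal A(L)$. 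Injectivity is then immediate from uniqueness of this normal form: if $H+A$ and $H+A'$ are Hilbert-Burch matrices of one and the same $I$, they differ by invertible row and column transformations, and the entries of $H$ pinned to $y^{d_i}$ on the diagonal and $-x$ on the subdiagonal, together with the strict bounds of $\mathcal A(L)$, leave room for no nontrivial such transformation, forcing $A=A'$.

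The hard part will be the normalization step in surjectivity together with the leading-term analysis of the minors in well-definedness: these are precisely the places where the asymmetric bounds of $\mathcal A(L)$ --- the extra ``$+1$'' when $i\le j$, the switch between $d_i-1$ and $d_j-1$ --- must be \emph{produced} rather than merely verified, and one must check both that the row and column operations needed to reach $H+A$ always exist and that they can be chosen compatibly with the degree filtration, so that the resulting $A$ is neither too large (violating a bound) nor further reducible (which would defeat injectivity); the Hilbert-Burch input, the codimension and Cohen-Macaulay bookkeeping, and the determinantal expansions are comparatively routine. As an alternative endgame, over an algebraically closed field one could combine the injectivity of $\Phi$ with the equality $\dim\mathcal A(L)=\dim V_{\deglex}(L)$ --- the latter an affine space by Bia\l{}ynicki-Birula since $\deglex$ is a weight order --- via Ax-Grothendieck; the direct argument has the advantage of being valid over an arbitrary field.
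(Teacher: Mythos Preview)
This theorem is not proved in the paper: it is cited from \cite{Con11} as background in Section~\ref{S:polys}, followed only by the remark that Constantinescu's well-definedness and surjectivity arguments work for any monomial ideal, while the lex-segment hypothesis is needed in his injectivity proof. There is thus no in-paper proof to compare against directly.

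That said, what the paper reveals elsewhere about Constantinescu's method matches your outline closely. The normalization you propose for surjectivity is exactly Constantinescu's system of $(i,j)$-\emph{reduction moves}, quoted in the proof of Proposition~\ref{CVmatrix} and discussed again in the remark after Conjecture~\ref{Conj}: each move is a specific row operation followed by a column operation that lowers the degree at position $(i,j)$ without leaving $\mathcal{A}(L)$, and the paper notes that for general $E$ the order in which to apply them is delicate, Constantinescu's device being to always choose moves that are maximal for an element. Your well-definedness via a one-parameter degeneration is a valid alternative to the syzygy-lifting route the present paper uses in Lemma~\ref{well}; the constancy of $\dim_\res P/I_t(H+sA)$ you invoke does need the Hilbert--Burch resolution as input (to get flatness), but you have already secured that. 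Your injectivity sketch is the thinnest part: the claim that two matrices $H+A$ and $H+A'$ presenting the same ideal differ by invertible row and column transformations, and that the bounds defining $\mathcal{A}(L)$ leave no room for a nontrivial one, is precisely the place where---per the paper's own commentary---the lex-segment hypothesis genuinely bites, so this step will require a real argument rather than a rigidity heuristic.
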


The proofs of well-definedness and surjectivity of $\Phi$ in \cite{Con11} hold for any monomial ideal and, although the lex-segment hypothesis is needed in his proof of injectivity, the author conjectures that $\Phi$ is a proper parametrization in the general case.

\section{From polynomials to power series}\label{S:local}

We are interested in a construction of Gr\"obner cells for $\m$-primary ideals of $P=\res[x,y]$ in the same spirit as the ones presented in \Cref{S:polys} but compatible with the local structure in the sense described in the Introduction. So from now on we will work in the ring of formal power series $R=\res[\![x,y]\!]$. 

Zero-dimensional monomial ideals of $R$ can still be described as $E=(x^t,\dots,x^{t-i}y^{m_i},\dots,y^{m_t})$ and we can define their canonical Hilbert-Burch matrix $H$ as introduced in \Cref{canonical}. 

The goal of this section is to provide the necessary tools to extend the strategies in the proofs by Conca-Valla and Constantinescu to the local setting.
In the first part, we define a local term ordering $\otau$ and the notion of $\otau$-enhanced standard basis, the local analogous to Gr\"obner basis. The second part is devoted to the lifting of syzygies.

\subsection{Enhanced standard basis and Grauert's division}\label{SS:local_ordering}

\begin{definition}
A term ordering $\tau$ in the polynomial ring $P=\res[x_1,\dots,x_n]$ induces a reverse-degree ordering $\otau$ in $R=\res[\![x_1,\dots,x_n]\!]$ such that for any monomials $m,m'$ in $R$, $m>_{\otau} m'$ if and only if
$$\deg(m) < \deg(m')$$
or
$$\deg(m) = \deg(m') \mbox{ and } m >_{\tau} m_0.$$
\noindent
We call $\otau$ the \textbf{local term ordering} induced by the global term ordering $\tau$. 
\end{definition} 

Note that the local term orderings induced by the lexicographical and the degree lexicographical term orderings are the same.

\begin{definition} Given an ideal $J$ of $R$, we define the \textbf{leading term ideal} of $J$ as the monomial ideal in $P$ generated by the leading terms with respect to the local term ordering $\otau$, i.e.
$$\Lt(J)=\left( \Lt(f): f\in J\right)\subset\res[x,y].$$

We call a subset $\lbrace f_1,\dots,f_m\rbrace$ of $J$ a \textbf{$\otau$-enhanced standard basis of $J$} if $\Lt(J)=(\Lt(f_1),\dots,\Lt(f_m))$.
\end{definition}

\begin{definition}\label{def:initialIdeal}
The \textbf{initial form} $f^\ast$ of an element $f$ in $R$ is the homogeneous polynomial consisting of the terms in $f$ of lowest degree. The \textbf{initial ideal} $J^{\ast}$ is the homogeneous ideal generated by the initial forms of elements in $J$. 
\end{definition}
\begin{remark}\label{rk:HF}
Note that, by definition of local term ordering, $\Lt(f)=\Ltau(f^\ast)$. Therefore, $\Lt(J)=\Ltau(J^\ast)$. Let $\HF_{R/J}=h$ denote the Hilbert function of $R/J$. Then
$$\HF_{R/J}=\HF_{P/J^\ast}=\HF_{P/ \Lt(J)}=\HF_{P/\Lex(h)},$$
where $\Lex(h)$ is the unique lex-segment ideal with the same Hilbert function.
\end{remark}

\begin{remark}\label{rk:terminology} The term standard basis was first used by Hironaka in \cite[Definition 3]{Hir64} to refer to systems of generators of the initial ideal $J^\ast$. However, this terminology is not consistent in literature and in other sources standard basis refer to what we here define as $\otau$-enhanced standard basis, e.g. \cite{GP08}. The notation used in this paper is the same as in \cite{Ber09}. 
\end{remark}

\begin{example}\label{ex:1}\emph{Comparison between leading terms w.r.t. global and local term orderings.} Consider the lex-segment ideal $L=(x^3,x^2y,xy^3,y^5)$ and set $\tau=\lex$. Let $H$ be its canonical Hilbert-Burch matrix and $U$ its degree matrix from \Cref{canonical}:

$$H=\left(\begin{array}{ccc}
y & 0 & 0 \\
-x & y^2 & 0\\
0 & -x & y^2 \\
0 & 0 & -x\\
\end{array}\right),\quad U=\left(\begin{array}{ccc}
1 & 2 & 3\\
1 & 2 & 3\\
0 & 1 & 2\\
-1 & 0 & 1\\
\end{array}\right).$$

Consider the matrix $M=H+N$, where $N$ is a $4\times 3$ matrix with all zero entries except for $1$ in the $(4,3)$-entry. From Conca-Valla parametrization in \Cref{ThCV08}, we know that $I=I_3(M)\subset P$ is an ideal in $V_0(L)$. Indeed, the maximal minors of $M$ give a $\tau$-Gr\"obner basis $\lbrace x^3-x^2,x^2y-xy,xy^3-y^3,y^5\rbrace$ of $I$ and $\Ltlex(I)=L$. 

However, the $3\times 3$-minors of $M$ are not a $\otau$-enhanced standard basis of the ideal $J=IR$, namely the extension of $I$ in the power series ring. In fact, $J=(x^2,xy,y^3)$ is itself a lex-segment ideal. The reason why the leading term ideal changes when computed with respect to $\otau$ is that $n_{4,3}=1$ has a term of degree lower than $u_{4,3}=1$. Finally, note that $\Ltau(I)\neq \Ltau(I^{\ast})=\Lt(J)=J$.
\end{example}

Buchberger division can be replaced in the power series ring by Grauert's division, see \cite{Gra72}. Later on, Mora gave an analogous method to Buchberger's algorithm in the local case: the tangent cone algorithm, see \cite{Mor82}. We reproduce next a modern formulation of Grauert's division theorem in $\res[\![x_1,\dots,x_n]\!]$ from \cite[Theorem~6.4.1]{GP08}:

\begin{theorem}\label{Grauert}[Grauert's Division Theorem]
Let $f,f_0,\dots,f_t$ be in $R$. Then there exist $q_0,\dots,q_t,r\in R$ such that $$f=\sum_{i=0}^t q_if_i+r$$ satisfying the following properties:
\begin{enumerate}
\item No monomial of $r$ is divisible by any $\Lt(f_i)$, for $0\leq i\leq t$.
\item If $q_i\neq 0$, $\Lt(q_if_i)\leq_{\otau}\Lt(f)$.
\end{enumerate}
\end{theorem}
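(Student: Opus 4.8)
The plan is to run a term-by-term division algorithm ordered by $\otau$ and to prove that it converges $\m$-adically. The first thing I would record is the combinatorial property of $\otau$ that makes this possible: even though $\otau$ is not a well-ordering, for every monomial $m$ the set $\{m' : m' >_{\otau} m\}$ is \emph{finite}, since it consists of the finitely many monomials of degree $< \deg m$ together with the finitely many monomials of degree $\deg m$ that are $\tau$-larger than $m$. Consequently every element of $R$ can be written as a (possibly infinite) sum $\sum_k c_k m^{(k)}$ with $m^{(1)} >_{\otau} m^{(2)} >_{\otau} \cdots$, and — the point on which everything hinges — any strictly $\otau$-decreasing sequence of monomials either terminates or has degrees tending to $\infty$, because only finitely many monomials lie below any fixed degree bound. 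Note also that $\otau$ is multiplicative and $R$ is a domain, so $\Lt(ab) = \Lt(a)\Lt(b)$ for $a,b \neq 0$.

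Next I would set up the iteration. Put $g_0 = f$. Given $g_j$: if $g_j = 0$, stop; otherwise write $\Lt(g_j) = c\,m$. If $m$ is divisible by no $\Lt(f_i)$, transfer $c\,m$ to the remainder and set $g_{j+1} = g_j - c\,m$. If $m = m'\,\Lt(f_i)$ for some index $i$ (say, the least such), add $(c/\LC(f_i))\,m'$ to $q_i$ and set $g_{j+1} = g_j - (c/\LC(f_i))\,m'\, f_i$. The monomials newly introduced into $g_{j+1}$ in this second case are those of $-(c/\LC(f_i))\,m'\,(f_i - \Lt(f_i))$, each of which is $<_{\otau} m'\,\Lt(f_i) = m$; since every monomial of $g_j$ other than $m$ is already $<_{\otau} m$, this gives $\Lt(g_{j+1}) <_{\otau} \Lt(g_j)$. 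So either the algorithm halts, or $(\Lt(g_j))_j$ is an infinite strictly $\otau$-decreasing sequence and hence $\deg \Lt(g_j) \to \infty$; since every monomial of $g_j$ has degree $\geq \deg \Lt(g_j)$, this forces $g_j \to 0$ in the $\m$-adic topology.

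With convergence in hand I would then verify the three assertions. A monomial contributed to $q_i$ at step $j$ equals $\Lt(g_j)/\Lt(f_i)$; these have degrees tending to $\infty$ and are pairwise distinct (the $\Lt(g_j)$ being distinct), so $q_i \in R$ is well defined, and likewise $r \in R$. Each step preserves the identity $g_j = f - \sum_i q_i^{(j)} f_i - r^{(j)}$ for the partial sums $q_i^{(j)}, r^{(j)}$; letting $j \to \infty$ and using $\m$-adic completeness yields $f = \sum_i q_i f_i + r$. Property (1) is immediate, since a monomial enters $r$ only when it is divisible by no $\Lt(f_i)$. For property (2): by multiplicativity and the domain property, $\Lt(q_i f_i) = \Lt(q_i)\,\Lt(f_i)$, and $\Lt(q_i)$ is one of the monomials $\Lt(g_j)/\Lt(f_i)$ that actually occur, so $\Lt(q_i f_i) = \Lt(g_j) \leq_{\otau} \Lt(g_0) = \Lt(f)$ for the corresponding $j$.

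The main obstacle is not any single computation but getting the convergence bookkeeping exactly right: one must keep firmly in mind that the $\otau$-leading term is the term of \emph{smallest} degree (refined by $\tau$), that a strictly $\otau$-decreasing sequence of monomials necessarily has unbounded degree, and that the $\m$-adic limit may legitimately be interchanged with the partial identities and with the infinite sums defining $q_i$ and $r$. Once these points are pinned down, the remainder of the argument is routine.
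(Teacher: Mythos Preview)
Your proof is correct. The paper does not supply its own proof of this statement; it quotes Grauert's division theorem verbatim from \cite[Theorem~6.4.1]{GP08} and uses it as a black box. The argument you give --- run the term-by-term reduction indexed by the strictly $\otau$-decreasing leading terms $\Lt(g_j)$, observe that such a sequence has degrees tending to infinity because only finitely many monomials are $\otau$-larger than any given one, and conclude $\m$-adic convergence of the partial quotients and remainder --- is precisely the standard proof, and indeed the paper alludes to the same mechanism later (``As in the proof of Grauert's division theorem in \cite[Theorem~6.4.1]{GP08}, $\sum g_k$ converges with respect to the $\m$-adic topology\ldots'') when proving \Cref{surj}. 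So your approach coincides with the one the paper relies on by citation.
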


These techniques can be used to extend results that are well-understood for graded algebras to the local case. In \cite{ERV14}, they have been successfully applied to characterize the Hilbert function of one dimensional quadratic complete intersections.

\subsection{Lifting of syzygies in local rings}
The connection between the lifting of syzygies and Gr\"obner bases has been widely studied in polynomial rings, see \cite[Theorem 2.4.1]{KR00}. Analogous results hold for rings of formal power series.

Let $\mathcal{F}$ be a subset $\lbrace f_0,\dots,f_t\rbrace$ of $R$ and set $\Lt(\mathcal{F})=\left\{\Lt(f_0),\dots,\Lt(f_t)\right\}$. By a slight abuse of notation, $\mathcal{F}$ and \(\Lt(\mathcal{F})\) will be regarded as $(t+1)$-tuples of $R^{t+1}$ when convenient. 
Mora, Pfister and Traverso prove in \cite[Theorem 3]{MPT89} that $\mathcal{F}$ is a  $\otau$-enhanced standard basis of an ideal of $R$ if and only if any homogeneous syzygy of $\Lt(\mathcal{F})$ can be lifted to a syzygy of $\mathcal{F}$.

For the sake of completeness, we will now give a precise definition of lifting in this setting following the notation of \cite[Definition 1.7]{Ber09}. We define the \textbf{degree} of $m=(m_1,\dots,m_{t+1})\in R^{t+1}$ with respect to the $(t+1)$-tuple $\mathcal{F}\in R^{t+1}$ and the local term ordering $\otau$ as 

$$\deg_{(\otau,\mathcal{F})}(m)=\max_{\otau}\lbrace \Lt(m_i f_{i-1}): 1\leq i\leq t+1\mbox{ and }m_i\neq 0\rbrace.$$

An element $\sigma = \{\sigma_1, \dots, \sigma_{t+1}\} \in R^{t+1}$ is homogeneous with respect to $(\otau,\mathcal{F})$-degree if all its non-zero components reach the maximum leading term, namely $\Lt(\sigma_i f_{i-1})=\deg_{(\otau,\mathcal{F})}(\sigma)$ for any $i\in\lbrace 1,\dots,t+1\rbrace$ such that $\sigma_i\neq 0$.

\begin{definition} We call $m\in R^{t+1}$ a \textbf{$(\otau,\mathcal{F})$-lifting} of a $(\otau,\mathcal{F})$-homogeneous element $\sigma\in R^{t+1}$ if $m=\sigma + n$, where $n=(n_1,\dots,n_{t+1})\in R^{t+1}$ satisfies 
\begin{equation}\label{eq:lifting}
\Lt(n_if_{i-1})<_{\otau} \deg_{(\otau,\mathcal{F})}(\sigma)
\end{equation}
for any $1\leq i\leq t+1$ such that $n_i\neq 0$.
Conversely, we call $\sigma$ the \textbf{$(\otau,\mathcal{F})$-leading form} of $m$ and denote it by
$\LF(m)=\sigma\in R^{t+1}$.
\end{definition}

If both $\otau$ and $\mathcal{F}$ are clear from the context, we will just say that $m$ is a lifting of $\sigma$, which in its turn is the leading form of $m$. The shift on the indexes of $n$ and $\mathcal{F}$ in (\ref{eq:lifting}) is convenient for our specific setting, as we will see in the following example.

\begin{example}{\emph{Liftings of homogeneous elements in
\(R\)-free modules.}} \label{Ex:lift}
Consider a monomial ideal $E=(x^t,x^{t-1}y^{m_1},\dots,y^{m_t})$ and take $\mathcal{F}=(f_0,\dots,f_t)\in R^{t+1}$ such that $\Lt(f_i)=x^{t-i}y^{m_i}$ for any $0\leq i\leq t$. The columns $\sigma^1,\dots,\sigma^t$ of the canonical Hilbert-Burch matrix $H$ of $E$ are $(\otau,\mathcal{F})$-homogeneous elements with $\deg_{(\otau,\mathcal{F})}(\sigma^j)=x^{t-j+1}y^{m_j}$ for any $1\leq j\leq t$. We can build liftings $m^j$ of $\sigma^j$ by taking $m^j=\sigma^j+n^j$, where $n^j=(n_{1,j},\dots,n_{t+1,j})$ is a $(t+1)$-tuple of $R^{t+1}$ such that either $n_{i,j}=0$ or $\Lt(n_{i,j})x^{t-i+1}y^{m_{i-1}}<_{\otau} x^{t-j+1}y^{m_j}$.
\end{example}

As in the polynomial case, Bertella proves in \cite[Theorem~1.10]{Ber09} that the module of syzygies of $\mathcal{F}$ is generated by liftings of homogeneous generators of the module of syzygies of $\Lt(\mathcal{F})$. Recall that the fact that syzygies lift is equivalent to the existence of a flat family $I_t$ where $I_0=\Lt(\mathcal{F})$ and $I_1=(\mathcal{F})$, see \cite[Chapter 1]{Ste03} and \cite[Lemma 18.8]{MS05}.

In the same paper, Bertella provides a very explicit characterization of $\otau$-enhanced standard bases in codimension two in terms of matrices that encode leading forms of the generators of the module of syzygies of the ideal:

\begin{theorem}\cite[Theorem~1.11]{Ber09}\label{1.11}
Let $M$ be a $(t+1)\times t$ matrix with entries in $R$. For $0\leq i\leq t$, let $f_i$ be the determinant of $M$ after removing row $i+1$ and set $\mathcal{F}=(f_0,\dots,f_t)$. Let $H$ be the matrix whose columns are the $(\otau,\mathcal{F})$-leading forms of the columns of $M$.
Assume that:
\begin{itemize}
\item $\h(f_0,\dots,f_t)=2$,
    \item $I_t(H)=(\Lt(f_0),\dots,\Lt(f_t))$.
\end{itemize}
Then the following are equivalent:
\begin{enumerate} 
    \item[(i)] $\lbrace f_0,\dots,f_t\rbrace$ is a $\otau$-enhanced standard basis of the ideal $I_t(M)$.
    \item[(ii)] $\h(\Lt(f_0),\dots,\Lt(f_t))=2$.
\end{enumerate}
\end{theorem}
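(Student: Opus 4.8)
The natural route is via the syzygy-lifting criterion for $\otau$-enhanced standard bases of \cite{MPT89} (compare \cite[Theorem~1.10]{Ber09}) together with the Hilbert--Burch theorem. Set $\mathcal{F}=(f_0,\dots,f_t)$ and $E=(\Lt(f_0),\dots,\Lt(f_t))\subset P$. Since each $f_i$ lies in $I:=I_t(M)=(f_0,\dots,f_t)$ we always have $E\subseteq\Lt(I)$, and (i) is exactly the assertion that this inclusion is an equality. For \emph{(i)$\Rightarrow$(ii)}: if $\mathcal{F}$ is a $\otau$-enhanced standard basis then $\Lt(I)=E$; by hypothesis $\h(I)=\h(f_0,\dots,f_t)=2$ and $\dim R=2$, so $I$ is $\m$-primary and $R/I$ has finite length, and since passing to the leading term ideal preserves the Hilbert function, $R/E=R/\Lt(I)$ has finite length too, i.e.\ $\h(E)=2$.

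For \emph{(ii)$\Rightarrow$(i)} I would argue in three steps. First, the columns of $M$ are, up to the usual alternating signs, syzygies of $\mathcal{F}$: appending a copy of the $j$-th column of $M$ and expanding the resulting $(t+1)\times(t+1)$ determinant along it gives $\sum_i(-1)^{i}m_{i,j}f_{i-1}=0$; moreover, since $\h(I)=2$, Hilbert--Burch shows these columns form an $R$-basis of $\Syz(f_0,\dots,f_t)$. Second, since $\h(I_t(H))=\h(E)=2$ by (ii) --- here we use the standing hypothesis $I_t(H)=E$ --- Hilbert--Burch applied to $H$ shows that the columns of $H$ generate the syzygy module of the maximal minors of $H$; being $(\otau,\mathcal{F})$-leading forms these columns have monomial entries, hence are homogeneous, and once the maximal minors of $H$ are identified, up to nonzero scalars, with $\Lt(f_0),\dots,\Lt(f_t)$, the columns of $H$ form a homogeneous generating set of $\Syz(\Lt(f_0),\dots,\Lt(f_t))$. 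Third, each column $\sigma^{j}$ of $H$ lifts: by construction $\sigma^{j}=\LF(M^{j})$, so the $j$-th column $M^{j}$ of $M$ is a $(\otau,\mathcal{F})$-lifting of $\sigma^{j}$ and, by the first step, a syzygy of $\mathcal{F}$. Thus a homogeneous generating set of $\Syz(\Lt(\mathcal{F}))$ lifts to syzygies of $\mathcal{F}$, and the lifting criterion of \cite{MPT89} yields that $\mathcal{F}$ is a $\otau$-enhanced standard basis of $I_t(M)$.

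The delicate point is the identification in the second step of the maximal minors of $H$ with the leading terms $\Lt(f_i)$ --- equivalently, that the columns of $H$ are syzygies of the tuple $(\Lt(f_0),\dots,\Lt(f_t))$ and not merely of some other generating set of the ideal $E$. In general the $(\otau,\mathcal{F})$-leading form of a syzygy of $\mathcal{F}$ need not be a syzygy of $\Lt(\mathcal{F})$ (this is the mechanism behind S-elements), so this must be squeezed out of the hypothesis $I_t(H)=E$ together with a comparison of the $(\otau,\mathcal{F})$-degree of each column of $H$ with the orders of the entries of the corresponding column of $M$: one shows that the $(t\times t)$-minor of $H$ obtained by removing row $i+1$, formed from the leading-form entries, has the same leading monomial as $f_i$, forcing it to equal $c_i\Lt(f_i)$ with $c_i\in\res^{\times}$. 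Alternatively one can bypass this step by a colength count: $E\subseteq\Lt(I)$ with both colengths finite by (ii), and since the colength of a codimension-two perfect ideal is a fixed function of the degree data of its Hilbert--Burch matrix while $H$ records precisely the top-degree data of $M$, one obtains $\dim_\res R/I=\dim_\res R/E$ and hence $E=\Lt(I)$. This identification is where I expect the real work to be.
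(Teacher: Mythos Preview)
The paper does not prove this statement: Theorem~\ref{1.11} is quoted verbatim from \cite[Theorem~1.11]{Ber09} and used as a black box (in Lemmas~\ref{well} and~\ref{surj}), with no proof supplied. There is therefore nothing in the present paper to compare your argument against.

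That said, your outline is the natural one and lines up with the auxiliary results the paper cites around this theorem: the syzygy-lifting criterion of \cite{MPT89} and \cite[Theorem~1.10]{Ber09}, together with Hilbert--Burch applied both to $M$ and to $H$. Your direction (i)$\Rightarrow$(ii) is clean. For (ii)$\Rightarrow$(i), the structure is right and you correctly isolate the only nontrivial point: identifying the maximal minors of $H$ with the tuple $(\Lt(f_0),\dots,\Lt(f_t))$ rather than merely with some generating set of the same ideal $E$. One small inaccuracy: the entries of a $(\otau,\mathcal{F})$-leading form need not be monomials in general---the definition only forces $\Lt(\sigma_i f_{i-1})$ to be constant over the nonzero components---so the phrase ``have monomial entries, hence are homogeneous'' overstates things; what you actually need (and have) is $(\otau,\Lt(\mathcal{F}))$-homogeneity. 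Your proposed colength comparison is a reasonable way to close the gap, though the sentence ``the colength \dots\ is a fixed function of the degree data of its Hilbert--Burch matrix while $H$ records precisely the top-degree data of $M$'' would need to be made precise; in practice Bertella's proof handles this identification directly, and if you want to reconstruct it you should expand the minors of $M$ and track which summand survives as the leading term.
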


In other words, for zero-dimensional ideals $J$ in $R=\res[\![x,y]\!]$, a $\otau$-enhanced standard basis $\mathcal{F}$ arises from maximal minors of a Hilbert-Burch matrix \(M\) that encodes liftings of syzygies of $\Lt(\mathcal{F})$. 

\section{Towards a parametrization of ideals in $\res[\![x,y]\!]$}
From now on we will consider \(\tau\) to be the lexicographical term ordering.

\begin{definition} \label{def:setV}
Given a zero-dimensional monomial $E$ ideal in $R$, we denote by $V(E)$ the set of ideals $J\subset R$ such that $\Lt(J)=E$. 
\end{definition}

Let us start by defining a set of matrices whose maximal minors generate all the ideals with the same leading term ideal with respect to the local term ordering $\otau$. 

\begin{definition}\label{def:setN}
Let $E$ be a monomial ideal with canonical Hilbert-Burch matrix $H$ and associated degree matrix $U=(u_{i,j})$. We define the set $\mathcal{N}(E)$ of $(t+1)\times t$ matrices $N=(n_{i,j})$ with entries in $\res[\![y]\!]$ such that all its non-zero entries satisfy
\[\ord(n_{i,j})\geq \begin{cases}
u_{i,j}+1, & i\leq j;\\
u_{i,j}, & i > j
\end{cases},\]
where \(\ord(n_{i,j})\) denotes the degree of the initial form of \(n_{i,j}\).
\end{definition}

\begin{theorem}\label{Param} Given a monomial ideal $E=(x^t,\dots,x^{t-i}y^{m_i},\dots,y^{m_t})$ in $R$ with canonical Hilbert-Burch matrix $H$ and degree matrix $U$, let $V(E)$ be the set of ideals in \Cref{def:setV} and let $\mathcal{N}(E)$ be the set of matrices in \Cref{def:setN}. The map
$$\begin{array}{rrcl}
\varphi: & \mathcal{N}(E) & \longrightarrow & V(E)\\
& N & \longmapsto & I_t(H+N)
\end{array}$$
\noindent
is surjective.
\end{theorem}

We prove \Cref{Param} in two steps: well-definedness in \Cref{well} and surjectivity in \Cref{surj}.

\begin{lemma}\label{well} The map $\varphi$ is well-defined.
\end{lemma}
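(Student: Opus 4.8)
The plan is to show that for any $N\in\mathcal N(E)$, the matrix $M=H+N$ is a Hilbert-Burch matrix of the ideal $J=I_t(M)$, that $J$ is zero-dimensional, and that $\Lt(J)=E$; i.e. $\varphi(N)\in V(E)$. First I would set up notation following \Cref{Ex:lift}: write $f_0,\dots,f_t$ for the signed maximal minors of $M$ obtained by deleting each row in turn, and $\mathcal F=(f_0,\dots,f_t)$. The ord-conditions defining $\mathcal N(E)$ are designed precisely so that the columns $m^j=\sigma^j+n^j$ of $M$ are $(\otau,\mathcal F)$-liftings of the columns $\sigma^j$ of $H$, where $\deg_{(\otau,\mathcal F)}(\sigma^j)=x^{t-j+1}y^{m_j}$: indeed $\Lt(n_{i,j})x^{t-i+1}y^{m_{i-1}}$ has $x$-degree $t-i+1+\ord(n_{i,j})\cdot 0$... more carefully, one checks that $\ord(n_{i,j})\ge u_{i,j}+[i\le j]$ translates into $\Lt(n_{i,j}f_{i-1})<_{\otau}\deg_{(\otau,\mathcal F)}(\sigma^j)$ by comparing total degrees (recall $\otau$ is a reverse-degree order, so a strictly larger $y$-exponent makes the term strictly smaller). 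So the $(\otau,\mathcal F)$-leading form of each column of $M$ is the corresponding column of $H$, and hence the matrix "$H$" produced in the statement of \Cref{1.11} from $M$ is exactly our canonical Hilbert-Burch matrix.

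Next I would verify the two hypotheses of \Cref{1.11}. The condition $I_t(H)=(\Lt(f_0),\dots,\Lt(f_t))$ should follow from the fact that $H$ is the canonical Hilbert-Burch matrix of $E$ together with the claim that $\Lt(f_i)=x^{t-i}y^{m_i}$; the latter is where one must argue that adding $N$ does not lower the leading term of any minor, using again the ord-bounds (every monomial appearing in $f_i$ other than $x^{t-i}y^{m_i}$ has strictly larger $\otau$-... i.e. strictly larger degree, or equal degree but smaller in $\tau$). The height-two condition $\h(f_0,\dots,f_t)=2$ holds because $J$ contains $x^t$ and a pure power of $y$ up to higher-order terms — more precisely $f_0$ and $f_t$ have leading terms $x^t$ and $y^{m_t}$, and an ideal in the two-dimensional local ring $R$ whose leading term ideal is zero-dimensional is itself zero-dimensional, so its height is $2$. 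Given these, \Cref{1.11} yields that $\mathcal F$ is a $\otau$-enhanced standard basis of $J$ and $\Lt(J)=(\Lt(f_0),\dots,\Lt(f_t))=E$, which is exactly the assertion $\varphi(N)\in V(E)$.

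The one genuine subtlety — the step I expect to be the main obstacle — is proving cleanly that $\Lt(f_i)=x^{t-i}y^{m_i}$ for the perturbed minors $f_i$, i.e. that the deformation $N$ does not create a monomial in any $f_i$ that is $\otau$-larger than the intended leading term. This is a determinant-expansion bookkeeping argument: one expands the $t\times t$ minor of $M=H+N$ as a sum over products of entries, and must show that every term other than the "diagonal-of-$H$" product has strictly larger total degree, or the same total degree but is $\tau$-smaller. The degree matrix $U$ is exactly the tool for this: $u_{i,j}$ records the $y$-degree shift needed to homogenize, and the defining inequalities of $\mathcal N(E)$ (with the extra $+1$ above the diagonal) are calibrated so that every off-diagonal substitution strictly raises the degree. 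I would isolate this as a small lemma about the order of the entries of $M$ versus the degree matrix, prove it by the combinatorial expansion, and then feed it into \Cref{1.11}. The remaining points (zero-dimensionality, the shape of $I_t(H)$) are then routine consequences of the structure of the canonical Hilbert-Burch matrix recalled in \Cref{canonical} and \Cref{Ex:lift}.
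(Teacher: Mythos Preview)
Your proposal is correct and follows essentially the same route as the paper: first establish $\Lt(f_i)=x^{t-i}y^{m_i}$ by expanding the minor over permutations and using the order bounds encoded in the degree matrix $U$, then observe that the columns of $M$ are $(\otau,\mathcal F)$-liftings of the columns of $H$, and finally invoke \Cref{1.11} after checking the height-two condition via the pure powers $x^t$ and $y^{m_t}$. The only cosmetic difference is ordering: the paper proves $\Lt(f_i)=x^{t-i}y^{m_i}$ \emph{before} discussing liftings (since the lifting condition $\Lt(n_{i,j}f_{i-1})<_{\otau}\deg_{(\otau,\mathcal F)}(\sigma^j)$ already presupposes knowledge of $\Lt(f_{i-1})$), whereas you present the lifting discussion first and flag the determinant computation as the deferred obstacle—logically you should swap these to avoid the apparent circularity.
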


\begin{proof}
We need to prove that the leading term ideal $\Lt\left(I_t(H+N)\right)$ is the mononomial ideal $E$ for any matrix $N=(n_{i,j})$ in the set $\mathcal{N}(E)$.

Consider the matrix $M=H+N$. The order bounds on the entries of $N$ yield 
$$\ord(m_{i,j})\geq \left\lbrace\begin{array}{cc}
u_{i,j}+1, & i<j;\\
u_{i,j}, & i\geq j.
\end{array}\right.$$

Set $f_i=\det[M]_{i+1}$, for any $0\leq i\leq t$, where $[M]_{i+1}$ is the square matrix that we get after removing row $i+1$ of $M$. Since 
$$f_i=\sum_{\sigma\in S_t}\sgn(\sigma)\prod_{1\leq k\leq t+1,\,k\neq i+1}m_{k,\sigma(k)},$$ 
we study the leading terms of polynomials of the form $h=\prod_{1\leq k\leq t+1,\,k\neq i+1}m_{k,\sigma(k)}$.

If $h$ is the product of all elements in the main diagonal of $[M]_{i+1}$, then $\Lt(h)=x^{t-i}y^{m_i}$. We claim that any other $h\neq 0$ satisfies $\Lt(h)<_{\otau} x^{t-i}y^{m_i}$. Indeed, since
$$\Lt(h)=\prod_{1\leq k\leq t+1,\,k\neq i+1}\Lt(m_{k,\sigma(k)}),$$
\noindent
then
$$\ord(h)=\sum_{1\leq k\leq t+1,\,k\neq i+1}\ord(m_{k,\sigma(k)})\geq \sum_{1\leq k\leq t+1,\,k\neq i+1} u_{k,\sigma(k)}.$$

Equality can only be reached if subindices $(i,j)$ satisfy $i\geq j$, namely
$$h=\prod_{k=1}^i(y^{d_k}+n_{k,k})\prod_{k=i+1}^{t+1}m_{k,\sigma(k)},$$
\noindent
hence the maximal power of $x$ is only reached at the main diagonal. Thus, any $h\neq 0$ away from the main diagonal satisfies $\Lt(h)<_{\otau}x^{t-i}y^{m_i}$ and, therefore, $\Lt(f_i)=x^{t-i}y^{m_i}$.

Now we need to show that $\{f_0,\dots,f_t\}$ forms a $\overline{\tau}$-enhanced standard basis of $I_t(M)$. From the order bounds on the entries $n_{i,j}$ of $N$, it follows that the columns of $M$ are liftings of the columns of $H$. See \Cref{Ex:lift} for more details.
By \Cref{1.11}, it is enough to show that $\h\left((\Lt(f_0),\dots,\Lt(f_t))\right)=2$,
which is clear because this ideal contains pure powers $x^t$ and $y^{m_t}$. Therefore, $\Lt\left(I_t(M)\right)=E$. 
\end{proof}

\begin{lemma}\label{surj}
The map $\varphi$ is surjective.
\end{lemma}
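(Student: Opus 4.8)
The plan is to show that every ideal $J\in V(E)$ arises as $I_t(H+N)$ for some $N\in\mathcal{N}(E)$. The natural strategy is to start from a $\otau$-enhanced standard basis of $J$ and reconstruct a Hilbert-Burch matrix in the required form by controlling the syzygies. So, first I would fix $J\in V(E)$; since $\Lt(J)=E=(x^t,x^{t-1}y^{m_1},\dots,y^{m_t})$ has exactly $t+1$ minimal generators, by Grauert's division (\Cref{Grauert}) we may pick a reduced $\otau$-enhanced standard basis $\mathcal{F}=\{g_0,\dots,g_t\}$ of $J$ with $\Lt(g_i)=x^{t-i}y^{m_i}$ and with all non-leading terms of $g_i$ not divisible by any generator of $E$. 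Because $J$ is zero-dimensional, $\h(J)=2$, and $R/J$ has the Hilbert-Burch form: the first syzygy module of $\mathcal{F}$ is free of rank $t$, and $\mathcal{F}$ is recovered (up to unit) as the maximal minors of the $(t+1)\times t$ syzygy matrix $M$. This is the matrix we want to massage into the shape $H+N$.

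**Next I would analyze the syzygies.** By \Cref{1.11} and Bertella's lifting theorem (\cite[Theorem~1.10]{Ber09}), the columns of $M$ can be chosen to be $(\otau,\mathcal{F})$-liftings of a generating set of homogeneous syzygies of $\Lt(\mathcal{F})=\{x^t,x^{t-1}y^{m_1},\dots,y^{m_t}\}$. The homogeneous syzygy module of this monomial ideal is well understood: a minimal generating set is given, up to the standard reduction, by the $t$ columns of the canonical matrix $H$, i.e. the Koszul-type syzygies $y^{d_j}e_j - x\,e_{j+1}$ where $d_j=m_j-m_{j-1}$. Hence $M$ may be taken so that its $j$-th column has $(\otau,\mathcal{F})$-leading form exactly the $j$-th column of $H$; that is, $M=H+N$ where each entry $n_{i,j}$ satisfies the lifting inequality $\Lt(n_{i,j}f_{i-1})<_{\otau}\deg_{(\otau,\mathcal F)}(\sigma^j)=x^{t-j+1}y^{m_j}$ (as in \Cref{Ex:lift}). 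Converting this leading-term inequality into an order bound on $n_{i,j}$ as an element of $\res[\![y]\!]$ is exactly the computation: $\Lt(n_{i,j})x^{t-i+1}y^{m_{i-1}}<_{\otau}x^{t-j+1}y^{m_j}$ forces $\ord(n_{i,j})\ge u_{i,j}+1$ when $i\le j$ and $\ge u_{i,j}$ when $i>j$, which is precisely the membership condition for $\mathcal{N}(E)$. One still has to verify $n_{i,j}\in\res[\![y]\!]$ (no $x$): this should follow from choosing $M$ so that each off-diagonal/diagonal entry in row $k$ pairs against $f_{k-1}$ whose leading term $x^{t-k+1}y^{m_{k-1}}$ already carries the maximal $x$-power in that row, forcing the entries themselves to be pure power series in $y$ after the reduction, just as in the monomial case treated in \Cref{well}.

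**The main obstacle** I expect is the bookkeeping needed to guarantee that the syzygy matrix can be put simultaneously into the bidiagonal-plus-correction shape $H+N$ with entries in $\res[\![y]\!]$, rather than merely into some Hilbert-Burch matrix whose leading forms span the right syzygies. Concretely: Bertella's theorem gives liftings of a generating set of homogeneous syzygies, but a priori the lifted matrix could differ from $H+N$ by column operations over $R$ and by inclusion of redundant Koszul syzygies; one must argue that after such operations the entries stay power series in $y$ and the order bounds survive. The cleanest route is probably to run a Grauert-division / Mora tangent-cone reduction directly on a chosen syzygy matrix, clearing any monomial in an entry that violates the $\mathcal{N}(E)$ bound by subtracting an appropriate multiple of another column — exactly the mechanism by which a term of too-low order would otherwise shrink $\Lt(J)$, contradicting $\Lt(J)=E$. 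Once the matrix is in the form $H+N$ with $N\in\mathcal N(E)$, we get $J=(\mathcal F)=I_t(H+N)=\varphi(N)$, establishing surjectivity and completing the proof of \Cref{Param} together with \Cref{well}.
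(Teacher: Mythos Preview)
Your overall strategy is right and matches the paper's: start from a $\otau$-enhanced standard basis $\{f_0,\dots,f_t\}$ with $\Lt(f_i)=x^{t-i}y^{m_i}$, build the syzygy matrix as liftings of the columns of $H$, deduce the order bounds from the lifting inequality, and conclude via Hilbert--Burch. The order-bound computation you sketch is exactly the one in the paper.

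However, the point you flag as ``the main obstacle'' is a genuine gap, and neither of your proposed fixes actually closes it. You need the entries $n_{i,j}$ to lie in $\res[\![y]\!]$, and you suggest this ``should follow'' from the $x$-power in $\Lt(f_{i-1})$ being maximal in its row, or alternatively from clearing bad terms by column operations. Neither argument works as stated: Bertella's lifting theorem only gives you \emph{some} lifting in $R^{t+1}$, and a priori nothing prevents $x$ from appearing in the correction terms; column operations over $R$ will in general destroy the specific leading-form shape $H$ you need, and there is no evident termination argument. The paper resolves this by a different, concrete mechanism: it first normalizes the standard basis so that no monomial of any $f_i$ (other than $\Lt(f_0)$) is divisible by $x^t$, observes that the $S$-polynomials $S_j=y^{d_j}f_{j-1}-xf_j$ then have no monomial divisible by $x^{t+1}$, and runs a tailored Grauert division in which each reduction step subtracts a \emph{pure $y$-power} multiple of some $f_{t-s}$ (matching the $x$-exponent of the current leading term exactly). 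This produces expressions $S_j=\sum q_{i,j}f_i$ with $q_{i,j}\in\res[\![y]\!]$ by construction, and these $q_{i,j}$ are precisely the entries of $N$. That explicit division is the missing idea in your proposal.
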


\begin{proof}

Consider a $\otau$-enhanced standard basis $\lbrace f_0,\dots,f_t\rbrace$ of $J\in V(E)$ such that $\Lt(f_i)=x^{t-i}y^{m_i}$. We can assume that the monomials in the support of the $f_i$'s are not divisible by $x^t$, except for $\Lt(f_0)$. 

For any $1\leq j\leq t$, consider the $S$-polynomials $S_j:=S(f_{j-1},f_j)=y^{d_j}f_{j-1}-xf_j$. Note that no monomial in $\supp(S_j)$ is divisible by $x^{t+1}$ for any $1\leq j\leq t$. By \Cref{Grauert} we have
$$S_j=\sum_{i=0}^{t}q_{i,j}f_i,$$
for some $q_{i,j}\in\res[\![x,y]\!]$ such that $\Lt(q_{i,j}f_i)\leq\Lt(S_j)$. 
We claim that $q_{i,j}\in\res[\![y]\!]$.

In fact, we will prove that this holds for any $f\in J$ such that $x^{t+1}$ does not divide any monomial in $\supp(f)$. Assume $\LC(f)=1$. Consider such an $f$, then $\Lt(f)=x^sy^r$ for some $0\leq s\leq t$. On the other hand, from the fact that $\Lt(f)$ belongs to $\Lt(J)$, it follows that $x^{t-i}y^{m_i}$ must divide $\Lt(f)$ for some $0\leq i\leq t$. Then $t-i\leq s$ and $m_i\leq r$, hence $m_{t-s}\leq m_i\leq r$. Define

$$g=f-y^{r-m_{t-s}}f_{t-s}.$$

The new element $g$ still belongs to $J$ and satisfies again that none of its monomials is divisible by $x^{t+1}$. In this way we can define a sequence $({g_i})_{i\in\mathbb{N}}$, starting by $g_0=f$, whose elements have decreasing leading terms with respect to $\otau$. 
As in the proof of Grauert's division theorem in \cite[Theorem~6.4.1]{GP08}, $\sum_{i\in\mathbb{N}} g_k$ converges with respect to the $\m$-adic topology and
$$f=\sum_{k\in\mathbb{N}}(g_k-g_{k+1})=\sum_{i=0}^t \left(\sum_{k\in\mathbb{N}, s_k=t-i}y^{r_k-m_{t-s_k}}\right)f_i.$$

Therefore, for any $1\leq j\leq t$, the $S$-polynomial $S_j$ provides a relation between generators of $J$
$$y^{d_j}f_{j-1}-xf_j+\sum_{i=1}^{t+1}n_{i,j}f_{i-1}=0,$$
where $n_{i,j}=-q_{i-1,j}\in\res[\![y]\!]$. This expression can be encoded in the matrix $M=H+N$, where $N=(n_{i,j})$. 
From $\Lt(n_{i,j}f_{i-1})\leq_{\overline{\tau}}\Lt(S_j)$ it follows that any column $m^i$ of $M$ is a lifting of a column $\sigma^i$ of $H$. 
The columns $\sigma^1,\dots,\sigma^t$ of $H$ constitute a homogeneous system of generators of $\Syz(\Lt(J))$. 
Then, by \cite[Theorem~1.10]{Ber09}, $m^1,\dots,m^t$ generate $\Syz(J)$. The Hilbert-Burch theorem ensures that $J$ is generated by the maximal minors of $M$.

Finally, the order bounds on the entries of $N$ are obtained again from $\Lt(n_{i,j}f_{i-1})\leq_{\overline{\tau}}\Lt(S_j)$.
Indeed, $x^{t-i+1}y^{m_{i-1}+\beta_{i,j}}<_{\otau}x^{t-j+1}y^{m_j}$,
where $\Lt(n_{i,j})=y^{\beta_{i,j}}$. Since 
\begin{equation}\label{eq:surj}
\beta_{i,j}+t-i+1+m_{i-1}\geq t-j+1+m_j, 
\end{equation}
we have $\beta_{i,j}\geq i-j+m_j-m_{i-1}=u_{i,j}$. If $\beta_{i,j}=u_{i,j}$, then equality holds in (\ref{eq:surj}) and hence $t-i+1<t-j+1$. In other words, $\beta_{i,j}\geq u_{i,j}$ and equality is only reachable when $i>j$.
\end{proof}

The proof of \Cref{surj} provides a constructive method to obtain a matrix $N\in\mathcal{N}(E)$ from any $\otau$-enhanced standard basis $\lbrace f_0,f_1,\dots,f_t\rbrace$ of $J\in V(E)$ such that $\Lt(f_i)=x^{t-i}y^{m_i}$ and $x^t$ does not divide any term of any $f_i$ except for $\Lt(f_0)$.  

\begin{example} \label{ex:polysurj}
{\emph{Matrices in $\mathcal{N}(E)$ with power series entries.}} Set $J=(x^4+x^3y,y^2+x^3+x^2y)$ and consider the $\otau$-enhanced standard basis

$$\begin{array}{l}
f_0=x^4+x^3y,\\
f_1=x^3y^2+y^5,\\
f_2=x^2y^2,\\
f_3=xy^2,\\
f_4=y^2+x^3+x^2y.
\end{array}$$
\noindent

It can be checked that it satisfies the conditions of \Cref{surj}. The first $S$-polynomial is $y^2f_0-xf_1=\left(\sum_{i\geq 1}y^i\right)f_1+\left(\sum_{i\geq 3}y^i\right)f_2-y^3f_3-\left(\sum_{i\geq 4}y^i\right)f_4$, hence some entries in $N$ are proper power series, not polynomials.
\end{example}

Next we will see that, for any ideal $J\in V(E)$, we can always find a matrix $N\in\mathcal{N}(E)$ with polynomial entries such that $\varphi(N)=J$.  

\begin{example} \label{ex:polysurj2} {\emph{Matrices in $\mathcal{N}(E)$ with polynomial entries.}}
The matrix in $\mathcal{N}(E)$ obtained from the $\otau$-enhanced standard basis of $J=(x^4+x^3y,y^2+x^3+x^2y)$ given in \Cref{ex:polysurj} is
\[
N=\left(\begin{array}{cccc}
0 & 0 & 0 & 1\\
- \sum_{i\geq 1} y^i & \sum_{i\geq 1} y^i & 0 & 0\\
- \sum_{i\geq 3} y^i & \sum_{i\geq 2} y^i & 0 & 0\\
y^3 & 0 & 0 & 0\\
\sum_{i\geq 4} y^i & - \sum_{i\geq 3} y^i & 0 & 0
\end{array}\right).
\]
By removing all the terms of degree larger than 3 we get the matrix
\[
\overline{N}=\left(\begin{array}{cccc}
0 & 0 & 0 & 1\\
- y-y^2-y^3 & y+y^2+y^3 & 0 & 0\\
- y^3 & y^2+y^3 & 0 & 0\\
y^3 & 0 & 0 & 0\\
0 & - y^3 & 0 & 0
\end{array}\right)
\]
with polynomial entries. Check that \(J =\varphi(N)=\varphi(\overline{N})\). Observe that, although the behaviour with respect to the syzygies is much better, the $\otau$-enhanced standard basis of $J$ given by the minors of \(H+\overline{N}\) is less simple, for example 
\(\bar{f}_0 = x^4+x^3y+y^4-xy^4+y^5-x^2y^4-xy^5+y^6-x^2y^5-xy^6\).
\end{example}

For a general $J\in V(E)$, we can only ensure that we will obtain the same ideal if we remove the terms in the entries of $N$ with degree strictly higher than the socle degree of $R/J$, namely the largest integer $s$ such that $\m^{s+1}\subset J$.

\begin{definition}\label{def:setNs} Let $E$ be a monomial ideal and let $s$ be the socle degree of $R/E$. We define the set of matrices \(\mathcal{N}(E)_{\le s} :=
\mathcal{N}(E) \cap (\res[\![y]\!]_{\le s})^{(t+1) \times t}\).
\end{definition}

\begin{proposition}\label{polysurj} 
The restriction of \(\varphi\) to \(\mathcal{N}(E)_{\le s}\)
is surjective.
\end{proposition}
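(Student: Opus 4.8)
The plan is to start from the surjectivity of $\varphi$ on all of $\mathcal N(E)$ already proved in \Cref{Param}: given $J\in V(E)$, choose $N=(n_{i,j})\in\mathcal N(E)$ with $\varphi(N)=I_t(H+N)=J$, and let $\overline N=(\overline n_{i,j})$ be the matrix obtained from $N$ by deleting from every entry all terms of degree $>s$. I would first check that $\overline N\in\mathcal N(E)_{\le s}$: its entries are polynomials in $y$ of degree $\le s$ by construction, and the order conditions of \Cref{def:setN} survive truncation, since deleting the terms of degree $>s$ either leaves $\ord(n_{i,j})$ unchanged (when $\ord(n_{i,j})\le s$, the lowest-degree term is not removed) or produces the zero entry (when $\ord(n_{i,j})>s$, or $n_{i,j}$ was already zero), which trivially meets the bound. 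By \Cref{well} we then have $\varphi(\overline N)=I_t(H+\overline N)\in V(E)$, so it remains to prove $I_t(H+\overline N)=J$.

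Set $M=H+N$ and $\overline M=H+\overline N$, and write $f_i=\det[M]_{i+1}$ and $\overline f_i=\det[\overline M]_{i+1}$ for $0\le i\le t$, so that $J=(f_0,\dots,f_t)$ and $I_t(\overline M)=(\overline f_0,\dots,\overline f_t)$. The heart of the argument is the claim that $f_i-\overline f_i\in\m^{s+1}$ for each $i$. The matrices $M$ and $\overline M$ differ only in the entries of $N-\overline N$, all of which lie in $y^{s+1}\res[\![y]\!]\subseteq\m^{s+1}$. Expanding $\det[M]_{i+1}-\det[\overline M]_{i+1}$ by the Leibniz formula and telescoping each difference of products $\prod_k m_{k,\sigma(k)}-\prod_k\overline m_{k,\sigma(k)}$, every resulting monomial in the entries contains at least one factor coming from $N-\overline N$, and the remaining factors lie in $R$; hence each such monomial lies in $\m^{s+1}$, and summing over $\sigma$ gives the claim.

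To finish I would use that the socle degree is a Hilbert-function invariant: since $\Lt(J)=\Lt(I_t(\overline M))=E$, both $R/J$ and $R/I_t(\overline M)$ have the same Hilbert function as $R/E$, which has socle degree $s$, so $\m^{s+1}\subseteq J$ and $\m^{s+1}\subseteq I_t(\overline M)$. Then $\overline f_i=f_i-(f_i-\overline f_i)\in J+\m^{s+1}=J$ for all $i$, giving $I_t(\overline M)\subseteq J$, and symmetrically $f_i=\overline f_i+(f_i-\overline f_i)\in I_t(\overline M)+\m^{s+1}=I_t(\overline M)$, giving $J\subseteq I_t(\overline M)$; hence $\varphi(\overline N)=I_t(\overline M)=J$. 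Since $J\in V(E)$ was arbitrary, the restriction of $\varphi$ to $\mathcal N(E)_{\le s}$ is surjective. I do not expect a genuine obstacle here: the two ingredients that make the argument work — that $N-\overline N$ has entries in $\m^{s+1}$, and that $\m^{s+1}$ sits inside every ideal of $V(E)$ — are both immediate, and the only point requiring a little care is the determinant expansion confirming that a perturbation of the entries supported in degrees $>s$ perturbs the maximal minors only within $\m^{s+1}$.
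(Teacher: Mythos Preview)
Your proof is correct and follows essentially the same approach as the paper: truncate the entries of $N$ above degree $s$, show via a determinant expansion that the maximal minors change only by elements of $\m^{s+1}$, and conclude equality of ideals using that $\m^{s+1}\subseteq J$. The only cosmetic differences are that the paper uses a Laplace-type expansion rather than Leibniz plus telescoping, and closes with the one-sided inclusion $J'\subseteq J$ together with $\Lt(J')=\Lt(J)=E$ (hence equal colength) instead of your symmetric double inclusion; both routes are equally valid.
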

\begin{proof}
Consider \(J \in V(E)\), by \Cref{surj} we know that \(J=I_t(H+N)\) for some $N\in \mathcal{N}(E)$.
Recall that $J$ has the same Hilbert function as $E$, hence the socle degree of $J$ is also $s$. We express $N$ as \(N = \overline{N} +  \widetilde{N}\), where \(\overline{N} \in \mathcal{N}(E)_{\le s}\) and \(\widetilde{N} \in (\res[\![y]\!]_{\ge s+1})^{(t+1) \times t}\).
We decompose \(\widetilde{N}\) into matrices \(\widetilde{N}_{i,j}\) with at most one non-zero entry at position \((i,j)\) such that 
\( \widetilde{N} = \sum_{i = 1,\dots t+1,j= 1,\dots,t} \widetilde{N}_{i,j}\).

By definition, $J=(f_0,\dots,f_t)$, where \(f_k =  \det([H +N]_{k+1})\). Our goal is to prove that $J=(\bar{f}_0,\dots,\bar{f}_t)$, where \(\bar{f}_k =  \det([H+ \overline{N}]_{k+1})\).

Let us use the Laplacian rule to rewrite the determinant. 
We denote by \(\left[M\right]_{(l,m),n}\) the (square) submatrix of \(M\) that is obtained by deleting the \(l\)-th and \(m\)-th rows and the \(n\)-th column. Then
\[
\begin{array}{ll}
f_k &= \det\left(\left[H+\overline{N}+ \sum_{i,j} \widetilde{N}_{i,j}\right]_{k+1}\right)\\
&= \det\left(\left[H+\overline{N}\right]_{k+1}\right) + 
\sum_{i,j} \pm \tilde{n}_{i,j} \cdot \det\left(\left[H+\overline{N}\right]_{(k+1,i),j}\right)\\
&= \bar{f}_k + \sum_{i,j} \pm \tilde{n}_{i,j} \cdot \det\left(\left[H+\overline{N}\right]_{(k+1,i),j}\right).
\end{array}
\]
Since \(\tilde{n}_{i,j} \in \res[\![y]\!]_{\ge s+1}\), it is clear that $f_k-\bar{f}_k\in (x,y)^{s+1} \subset J$. Then \(J'=\left(\bar{f}_0, \dots, \bar{f}_t\right)\subset J\) and, because $\Lt(J')=\Lt(J)$, we deduce that \(J=\left(\bar{f}_0, \dots, \bar{f}_t\right)\).
\end{proof}

It is important to note that \Cref{polysurj} does not provide a parametrization of $V(E)$. In general, the map $\varphi$ is not injective even when we restrict it to $\mathcal{N}(E)_{\le s}$.

\begin{example}\label{ex:noninj} {\emph{The restriction of $\varphi$ is not injective.}} Continuing \Cref{ex:polysurj} and \Cref{ex:polysurj2}, note that $\overline{N}\in\mathcal{N}_{\le 4}(E)$ but also  
\[
N'=\left(
\begin{array}{llll}
0& 0&     0&1\\
-y&0&     0&0\\
0&0&    0&0\\
0&0&     0&0\\
0&0&0&0 
\end{array}
\right)\in\mathcal{N}_{\le 4}(E),
\]
with \(\varphi(N')=\varphi(\overline{N})=J\).

The corresponding associated $\otau$-enhanced standard basis of $J$ is $\lbrace x^4+x^3y,\  x^3y^2,\ x^2y^2,\ xy^2,\ y^2+x^3+x^2y\rbrace$.
\end{example}

\begin{remark}\label{miniinj}
We have seen that \(\varphi: \mathcal{N}(E) \to V(E)\) as well as its restriction 
\(\varphi: \mathcal{N}_{\le s}(E) \to V(E)\) are not injective. Although an ideal $J$ can be obtained from different matrices of the form $H+N$, the systems of polynomial generators $\lbrace f_0,\dots,f_t\rbrace$ of $J$ that arise as maximal minors of any such matrices are all different. In other words, the map $\mathcal{N}(E) \to R^{t+1}$, that sends $N$ to the maximal minors of $H+N$, is injective.

Indeed, if two matrices \(N, N' \in \mathcal{N}(E)\) satisfy that 
the maximal minors of \(H+N\) and \(H+N'\) coincide, it follows that 
\(N=N'\). 
The argument is the same as in the first paragraph of \cite[3.2]{Con11} and we reproduce it here.
Let \(\{f_0,\dots,f_t\}\) be the maximal minors of \(H+N\) and \(H+N'\). 
The columns of both matrices are syzygies of \(\lbrace f_0,\dots,f_t\rbrace\), thence
the columns of their difference \(H+N-(H+N')=N-N' \in \res[\![y]\!]^{(t+1)\times t}\) are also syzygies, 
but since the leading terms of the \(f_i\) involve different powers of \(x\), it follows that \(N=N'\).
\end{remark}

\section{Parametrization for lex-segment leading term ideals}

A special situation occurs when a $\otau$-enhanced standard basis of $J$ and a Gr\"obner basis of the ideal $I=J\cap P$ with respect to the lexicographical term ordering, shortly denoted \(\lex\)-Gr\"obner basis, coincide. In this setting, we can overcome the lack of injectivity of $\varphi:\mathcal{N}(E)\rightarrow V(E)$ by using Conca-Valla's parametrization of $V_0(E)$.

\begin{proposition}\label{CVmatrix} Let $J\in V(E)$ be an ideal that admits a $\otau$-enhanced standard basis \(\{f_0,\dots,f_t\}\) 
that is also a \(\lex\)-Gr\"obner basis of $I=J\cap P$
with \(\Lt(f_i)=\Ltlex(f_i)\). Then there exists a unique matrix $N\in\mathcal{N}(E)\cap T_0(E)$ such that $J=I_t(H+N)$.  
\end{proposition}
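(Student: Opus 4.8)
The plan is to combine the surjectivity of $\varphi$ on $\mathcal{N}(E)$ (\Cref{surj}) with Conca--Valla's bijection $\Phi\colon T_0(E)\to V_0(E)$ (\Cref{ThCV08}), using the hypothesis that a single basis $\{f_0,\dots,f_t\}$ serves simultaneously as a $\otau$-enhanced standard basis of $J$ and a $\lex$-Gröbner basis of $I=J\cap P$. First I would observe that under this hypothesis the ideal $I\subset P$ satisfies $\Ltlex(I)=E$: indeed $\Ltlex(I)=(\Ltlex(f_0),\dots,\Ltlex(f_t))=(\Lt(f_0),\dots,\Lt(f_t))=E$, where the middle equality is exactly the assumption $\Lt(f_i)=\Ltlex(f_i)$ and the fact that $\{f_i\}$ is a Gröbner basis is what makes the leading terms generate. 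Hence $I\in V_0(E)$, and by \Cref{ThCV08} there is a unique $N\in T_0(E)$ with $I=I_t(H+N)$; moreover the construction in \cite{CV08} realizes $\{f_0,\dots,f_t\}$ as the maximal minors of $H+N$ (this uses \Cref{miniinj}: the minor map is injective, so the $T_0(E)$-matrix producing the given basis is the one just named).

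Next I would check that this same $N$ lies in $\mathcal{N}(E)$ and satisfies $I_t(H+N)R=J$ as ideals of $R$. For the latter: $I_t(H+N)=I$ in $P$ and $J=IR$ because $\{f_i\}$ generates $J$ and lies in $P$, so $J=I_t(H+N)$ read in $R$. For membership in $\mathcal{N}(E)$, I would run the same $S$-polynomial / Grauert-division argument as in the proof of \Cref{surj}: the columns of $H+N$ encode the relations $y^{d_j}f_{j-1}-xf_j+\sum_i n_{i,j}f_{i-1}=0$, and because $\{f_i\}$ is a $\otau$-enhanced standard basis the degree bound $\Lt(n_{i,j}f_{i-1})<_{\otau}\Lt(S_j)$ must hold, which translates (as in equation~\eqref{eq:surj}) into $\ord(n_{i,j})\ge u_{i,j}$ with equality only when $i>j$ — exactly the defining condition of $\mathcal{N}(E)$. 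Alternatively, and more directly: $N\in T_0(E)$ means $n_{i,j}=0$ for $i<j$ and $\deg n_{i,j}<d_j$ for $i\ge j$, and one computes $u_{i,j}=m_j-m_{i-1}+i-j\le m_j-m_{i-1}$ for $i\ge j$, so for $i>j$ the order condition $\ord(n_{i,j})\ge u_{i,j}$ needs to be extracted from the standard-basis hypothesis rather than from $T_0(E)$ alone — hence the $S$-polynomial argument is the one that actually does the work.

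For uniqueness: suppose $N,N'\in\mathcal{N}(E)\cap T_0(E)$ both give $J=I_t(H+N)=I_t(H+N')$. Intersecting with $P$, both $H+N$ and $H+N'$ are Hilbert–Burch matrices of $I=J\cap P$ with entries in $\res[y]$ and with the shape prescribed by $T_0(E)$; since $\Phi$ in \Cref{ThCV08} is a bijection, $N=N'$. (One should note here that $I_t(H+N)\cap P = I_t(H+N)$ already as an ideal of $P$, since the matrix has polynomial entries, so no extra care is needed.) The main obstacle I anticipate is the bookkeeping in the second paragraph: making rigorous that the \emph{same} matrix $N$ produced by Conca--Valla's theorem is the one whose columns encode the $S$-polynomial relations among the given $\{f_i\}$, i.e.\ correctly invoking the injectivity of the minor map (\Cref{miniinj}) to identify the abstractly-existing $T_0(E)$-matrix with the concretely-constructed syzygy matrix, and then reading off the order bounds. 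Everything else is a direct assembly of \Cref{surj}, \Cref{ThCV08}, and \Cref{miniinj}.
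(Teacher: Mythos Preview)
Your approach is genuinely different from the paper's. The paper starts on the $\mathcal{N}(E)$ side: it takes the matrix coming from the $\otau$-enhanced standard basis $\{f_i\}$ as in the proof of \Cref{surj}, argues that the lex-Gr\"obner hypothesis forces this matrix to be strictly lower triangular with polynomial entries, and then performs Constantinescu's $(i,j)$-reduction moves column by column to push the degrees below $d_j$, checking at each step that the resulting matrix stays in $\mathcal{N}(E)$. You instead start on the $T_0(E)$ side, pulling $N$ directly from Conca--Valla's bijection, and attempt to verify $N\in\mathcal{N}(E)$ after the fact.

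There is a real gap in that verification. You claim the columns of $H+N$ encode relations among the \emph{given} $f_i$, but the columns of the Conca--Valla matrix are syzygies of its own maximal minors $g_i$, and nothing you invoke forces $g_i=f_i$. Your appeal to \Cref{miniinj} is in the wrong direction: that remark says $N\mapsto(\text{minors of }H+N)$ is injective, not that a prescribed standard basis $\{f_i\}$ is realized as the minors of some $N\in T_0(E)$. Even granting $g_i=f_i$, the inference ``$\{f_i\}$ is a $\otau$-enhanced standard basis, therefore $\Lt(n_{i,j}f_{i-1})<_{\otau}\Lt(S_j)$'' is not automatic: that bound arises in \Cref{surj} from a specific Grauert division, not from the standard-basis property alone, and an arbitrary syzygy with entries in $\res[y]$ need not satisfy it. What you actually need is the implication ``$N\in T_0(E)$ and $I_t(H+N)R\in V(E)\Longrightarrow N\in\mathcal{N}(E)$,'' which is essentially equivalent to the proposition itself; the paper supplies precisely this missing step via the reduction moves. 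Your uniqueness argument via \Cref{ThCV08} is fine.
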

\begin{proof} 
Let \(\{f_0, \dots, f_t\}\) be a \(\otau\)-enhanced standard basis of \(J\) that is also a \(\lex\)-Gr\"obner basis 
  with \(\Lt(f_i)=\Ltlex(f_i)=x^{t-i}y^{m_i}\).
Then the \(f_i\) are the signed maximal minors of \(H+N\) for some \(N \in \mathcal{N}(E)\) that is a strictly lower triangular matrix with 
polynomial entries. 
Here by strictly lower triangular, we mean that 
\(n_{i,j}=0\) for all \(i \le j\).

Assume that \(N\) is not yet in \(T_0(E)\), 
namely there exist \((i,j)\) with \(\deg(n_{i,j})\ge d_j\). 
In that case we decompose $n_{i,j}=r_{i,j} + y^{d_j} q_{i,j}$ with 
\begin{itemize}\label{OrderBounds}
\item \(u_{i,j}\le \ord(r_{i,j})\le\deg(r_{i,j}) \le d_j-1\),
\item \(\max(u_{i,j}-d_j,0)\le \ord(q_{i,j})\le\deg(q_{i,j}) \le \deg n_{i,j}-d_j\).
\end{itemize}

Next we will perform the 
\((i,j)\)-reduction move defined in \cite[Proof~of~3]{Con11} on \(N\). Note that since \(N\) is strictly lower triangular, it corresponds to the second type of reduction moves:

\begin{itemize}
    \item[Step 1.] Add the \(j\)-th row multiplied by \(-q_{i,j}\) to the 
\(i\)-th row of \(H+N\).
    \item[Step 2.] Add the \((i-1)\)-th column multiplied by \(q_{i,j}\) to the
\((j-1)\)-th column of the matrix resulting from Step 1.
\end{itemize}

This operation does not change the ideal \(J\) and produces a new matrix \(\widetilde{N}\) whose $(i,j)$-entry has degree strictly less than $d_j$. 
Checking that it preserves the order bounds on the entries is a technicality that follows from the order bounds on $r_{i,j}$ and $q_{i,j}$.
Thus the matrix \(\widetilde{N}\) we obtain will still be in \(\mathcal{N}(E)\) and the maximal minors of \(H+\widetilde{N}\)
will form a \(\otau\)-enhanced standard basis of \(J\).

After performing finitely many reduction steps from the last to the first column, we will obtain a matrix
\(N_0 \in T_0(E) \cap \mathcal{N}(E)\)
with \(J = I_t(H+N_0)\). By \Cref{ThCV08}, \(N
_0\) is unique.
\end{proof}

This result allows us to extend the definition of canonical Hilbert-Burch matrix to any ideal that has a \(\otau\)-enhanced standard basis \(\{f_0,\dots,f_t\}\) that satisfies $\Lt(f_i)=\Ltlex(f_i)=x^{t-i}y^{m_i}$. Moreover, the proof of \Cref{CVmatrix} gives an algorithm to construct the canonical matrix from the matrix that encodes the S-polynomials of \(\{f_0,\dots,f_t\}\) via reduction moves.

\begin{definition}\label{def:setM}
Set $\mathcal{M}(E):=\mathcal{N}(E)\cap T_0(E)$. Let $J\in V(E)$ be an ideal that admits a $\otau$-enhanced standard basis which is also a \(\lex\)-Gr\"obner basis of $I=J\cap P$. We define the \textbf{canonical Hilbert-Burch matrix}\index{canonical Hilbert-Burch matrix} of $J$ as \(H+N\), where $N$ is the unique matrix in $\mathcal{M}(E)$ such that $J=I_t(H+N)$.
\end{definition}

\begin{remark}\label{V2}
In \cite{CV08}, Conca and Valla provide parametrizations of certain subsets of $V_0(E)$. $V_2(E)$ is the set of all $(x,y)-$primary ideals $I$ such that $\Ltlex(I)=E$ and it is parametrized by the set of matrices $T_2(E)$ (see \cite[Definition 3.2]{CV08} for an explicit description). It is not difficult to check that $\mathcal{M}(E)=\mathcal{N}(E)\cap T_0(E)=\mathcal{N}(E)\cap T_2(E)$. 
\end{remark}

\medskip

\begin{example}\label{Ex:step1}{\emph{Canonical Hilbert-Burch matrix.}}
Consider $J=(x^6,xy^2-y^5,y^8)$ and $E=\Lt(J)=(x^6$, $x^5y^2$, $x^4y^2$, $x^3y^2$, $x^2y^2$, $xy^2$, $y^8)$. 
Set \(f_0 = x^6\), \(f_i = x^{t-i}y^2\) for 
\(i=1,\dots,4\), \(f_5 = xy^2-y^5\) and \(f_6=y^8\). Note that $\{f_0,\dots,f_6\}$ is a $\otau$-enhanced standard basis of $J$ with $\Ltlex(f_i)=\Lt(f_i)=x^{t-i}y^{m_i}$.
The matrix \(H+N\) associated to 
\(\{f_0,\dots,f_6\}\) is the following:
\[
\left(\begin{array}{cccccc}
y^2 & 0 & 0 & 0 & 0 & 0\\
-x & 1 & 0 & 0 & 0 & 0\\
0 & -x & 1 & 0 & 0 & 0\\
0 & 0 & -x & 1 & 0 & 0\\
0 & 0 & 0 & -x & 1 & 0\\
0 & 0 & 0 & 0 & -x-y^3 & y^6\\
0 & 0 & 0 & 0 & -1 & -x+y^3\\
\end{array}\right).
\]
The matrix \(N \in \mathcal{N}(E)\) is strictly lower triangular, 
but since \(\deg(n_{6,5})=3 \ge d_5=0\) and \(\deg(n_{7,5})=0\ge d_5=0\),
we see that \(N \notin T_0(E)\). By performing the reduction moves \((6,5)\) and \((7,5)\), we obtain the canonical Hilbert-Burch matrix $H+N_0$ of $J$, with $N_0\in\mathcal{M}(E)$:
\[M_0=H+N_0=\left(\begin{array}{cccccc}
y^2 & 0 & 0 & 0 & 0 & 0\\
-x & 1 & 0 & 0 & 0 & 0\\
0 & -x & 1 & 0 & 0 & 0\\
0 & 0 & -x & 1 & 0 & 0\\
0 & 0 & 0 & -x & 1 & 0\\
0 & 0 & 0 & 0 & -x & y^6\\
0 & 0 & 0 & 0 & 0 & -x+y^3\\
\end{array}\right).\]

\end{example}

There is a class of monomial ideals \(E\) such that any ideal with leading term ideal \(E\) is under the hypothesis of \Cref{CVmatrix}:

\begin{lemma}\label{lexGrobner} 
Let \(E = (x^t,x^{t-1}y^{m_1},\dots, y^{m_t})\) be a monomial ideal such that 
\begin{equation}\label{lexGBcond}
m_j-j-1 \le m_i-i    \mbox{ for all } j<i.
\end{equation} 
Then the reduced $\otau$-enhanced standard basis of $J \in V(E)$ is a Gr\"obner basis of $I=J\cap P$ with respect to the lexicographical term ordering and $\Ltlex(I)=E$. 
\end{lemma}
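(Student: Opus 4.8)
The plan is to take the reduced $\otau$-enhanced standard basis $\{f_0,\dots,f_t\}$ of $J$, which we may take to satisfy $\Lt(f_i)=x^{t-i}y^{m_i}$ (reduce the tails of the standard basis produced in \Cref{Param}), to prove that \eqref{lexGBcond} forces $\Ltlex(f_i)=x^{t-i}y^{m_i}$ for every $i$, and then to conclude by a colength count. First I would record the two properties of the reduced basis that enter the argument: since $E$ is $(x,y)$-primary, $R/J$ is Artinian, so each $f_i$ is a polynomial and hence lies in $J\cap P=I$; and every monomial of $f_i$ other than $\Lt(f_i)$ is a standard monomial of $E$, i.e.\ is not in $E$, which for $E=(x^t,\dots,x^{t-i}y^{m_i},\dots,y^{m_t})$ means precisely that $x^ay^b\notin E$ if and only if $a<t$ and $b<m_{t-a}$.

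The core of the proof is the claim that $\Ltlex(f_i)=x^{t-i}y^{m_i}$ for all $i$. Since $\Ltlex(f_i)$ is the monomial of $f_i$ of largest $x$-degree, and since the only monomial of $f_i$ of $x$-degree $t-i$ is $\Lt(f_i)$ itself (any other monomial $x^{t-i}y^{b}$ of $f_i$ would be $<_{\otau}x^{t-i}y^{m_i}$, forcing $b>m_i$, hence would be divisible by $\Lt(f_i)$ and could not be a tail monomial), it is enough to show that no tail monomial $x^ay^b$ of $f_i$ has $a>t-i$. Suppose one did, and set $j:=t-a$, so that $0<j<i$. As a standard monomial, $x^ay^b$ satisfies $a<t$ and $b<m_{t-a}=m_j$. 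As $x^ay^b<_{\otau}x^{t-i}y^{m_i}$ while $a=t-j>t-i$, the $\otau$-comparison is decided by total degree, so $a+b>(t-i)+m_i$, that is $b>m_i-(i-j)$. On the other hand, \eqref{lexGBcond} applied to the pair $j<i$ reads $m_i-m_j\ge(i-j)-1$, hence $m_j\le m_i-(i-j)+1$, and combined with $b<m_j$ this yields $b\le m_i-(i-j)$. These two bounds on the integer $b$ are incompatible, which proves the claim.

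Granting the claim, I would finish as follows. The monomials $\Lt(f_i)=\Ltlex(f_i)$ generate $E$, so $E\subseteq\Ltlex(I)$ and there is a surjection $P/E\twoheadrightarrow P/\Ltlex(I)$. But $\dim_\res P/\Ltlex(I)=\dim_\res P/I=\dim_\res R/J=\dim_\res P/E$, where the middle equality is the isomorphism $R/J\simeq P/I$ and the last one follows from $\HF_{R/J}=\HF_{P/E}$; thus the surjection is an isomorphism and $\Ltlex(I)=E$. In particular the leading terms of $f_0,\dots,f_t$ generate $\Ltlex(I)$, so $\{f_0,\dots,f_t\}$ is a Gr\"obner basis of $I$ with respect to the lexicographic order and $\Ltlex(I)=E$, which is exactly the assertion.

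The substantive step is the combinatorial claim in the second paragraph; everything else is the standard comparison between $R/J$ and $P/I$ together with colength bookkeeping. The hypothesis \eqref{lexGBcond} is used at exactly one point, the final inequality of that argument, where it is what prevents a monomial of large total degree but large $x$-degree from appearing in the tail of some $f_i$ and breaking the equality $\Lt(f_i)=\Ltlex(f_i)$.
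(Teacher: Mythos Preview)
Your proof is correct and follows essentially the same approach as the paper: first show $\Ltlex(f_i)=\Lt(f_i)$ using reducedness together with condition~\eqref{lexGBcond}, then conclude $\Ltlex(I)=E$ by a colength comparison. The paper organizes step (i) as two cases (same $x$-exponent vs.\ larger $x$-exponent) while you separate off the same-$x$-exponent case before the main contradiction, but the inequalities and the single use of~\eqref{lexGBcond} are identical; the only cosmetic difference is that the paper indexes the reduced basis by a subset $\mathcal{I}\subset\{0,\dots,t\}$ rather than the full range.
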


\begin{proof} 
Let $\{f_i\}_{i \in \I}$ with \(\I \subset \{0,\dots,t\}\) be the unique reduced $\otau$-enhanced standard basis of $J$ with \(\Lt(f_i)=x^{t-i}y^{m_i}\). There are two steps in this proof:

\medskip

\noindent
$(i)$ $\Ltlex(f_i)=x^{t-i}y^{m_i}$ for any $i \in \I$. 

Let us suppose that $\Ltlex(f_i)=x^ky^l\neq x^{t-i}y^{m_i}$. Since $x^{t-i}y^{m_i}\in\supp(f_i)$, then 
\[x^ky^l>_{\lex}x^{t-i}y^{m_i}\]
and hence there are two possible situations:

\noindent
\emph{Case I:} $k=t-i$ and $l>m_i$. $\Ltlex(f_i)=x^{t-i}y^l$ is in the support of $\tail_{\otau}(f_i)$ but $x^{t-i}y^l\in E$, which contradicts the reducedness hypothesis on $\{f_j\}_{j \in \I}$. 

\noindent
\emph{Case II:} $k>t-i$. Then we can set $k=t-j$ for some $0<j<i$. Since $\Ltlex(f_i)=x^{t-j}y^l$ and $\Lt(f_i)=x^{t-i}y^{m_i}$, then 
$$t-i+m_i=\deg(x^{t-i}y^{m_i})\leq\deg(x^{t-j}y^l)=t-j+l.$$
If there is an equality on the degree, the local term ordering is equal to the lexicographical term ordering, hence $\Lt(f_i)=x^{t-j}y^l$ and we reach a contradiction. Therefore, we have $t-i+m_i<t-j+l$.
If $l\geq m_j$, the argument of Case I holds. 
Thus, we obtain the following sequence of strict inequalities
\[
t-i+m_i < t-j+l < t-j+m_j.
\]
It is equivalent to \[m_i-i+1 \le l-j \le m_j-j-1\]
But by assumption 
\( m_j-j-1 \le m_i-i\), which leads to a contradiction.

\medskip

$(ii)$ $\{f_i\}_{i \in \I}$ is a Gr\"obner basis of $I$ with respect to the lexicographical term ordering.

Since $\{f_i\}_{i \in \I}$ is a subset of $I$, $E=(\Ltlex(f_i))_{i \in \I} \subset \Ltlex(I)$.
We can check that $\Ltlex(I)=E$ by looking at the dimensions.
From $R/J\cong P/I$, it follows that 
\[\dim_\res(P/\Ltlex(I))=\dim_\res(P/I)=\dim_\res(R/J)=\dim_\res(P/\Lt(J))=\dim_\res(P/E)\]
and hence the inclusion $E\subset \Ltlex(I)$ becomes an equality.
\end{proof}

\begin{remark}
Since for lex-segment ideals the sequence \((m_i-i)_i\) 
is strictly increasing, lex-segment ideals satisfy \ref{lexGrobner}. 
But the class of ideals is bigger. For example ideals with equality \(m_i = m_{i+1}\) for exactly one \(i\) satisfy this condition too.
\end{remark}

\begin{theorem}\label{ParamLex}
Let $E=(x^t,\dots,x^{t-i}y^{m_i},\dots,y^{m_t})$ be a lex-segment ideal (or an ideal satisfying condition (\ref{lexGBcond})). Let $H$ be the canonical Hilbert-Burch matrix of $E$. Then the restriction of the map \(\varphi\) from \Cref{Param} to \(\mathcal{M}(E)\)

$$\begin{array}{rrcl}
\varphi: & \mathcal{M}(E) & \longrightarrow & V(E)\\
& N & \longmapsto & I_t(H+N)
\end{array}$$

\noindent
is a bijection.
\end{theorem}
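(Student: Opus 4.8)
The plan is to establish the two remaining properties of $\varphi$ restricted to $\mathcal{M}(E)$: surjectivity and injectivity. Well-definedness is already covered by \lemref{well}, since $\mathcal{M}(E)\subset\mathcal{N}(E)$. For surjectivity, I would combine \lemref{lexGrobner} with \propref{CVmatrix}: given any $J\in V(E)$, since $E$ satisfies condition (\ref{lexGBcond}) (in particular when $E$ is lex-segment), \lemref{lexGrobner} tells us that the reduced $\otau$-enhanced standard basis $\{f_i\}$ of $J$ is simultaneously a $\lex$-Gröbner basis of $I=J\cap P$ with $\Lt(f_i)=\Ltlex(f_i)=x^{t-i}y^{m_i}$. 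Hence $J$ is exactly in the situation of the hypothesis of \propref{CVmatrix}, which then produces a (unique) matrix $N\in\mathcal{N}(E)\cap T_0(E)=\mathcal{M}(E)$ with $J=I_t(H+N)=\varphi(N)$. This gives surjectivity essentially for free.

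For injectivity, suppose $N,N'\in\mathcal{M}(E)$ satisfy $\varphi(N)=\varphi(N')=J$. The key observation is that $\mathcal{M}(E)=\mathcal{N}(E)\cap T_0(E)\subset T_0(E)$, so both $N$ and $N'$ lie in the Conca–Valla domain $T_0(E)$. Now I need to relate the power-series ideal $J\subset R$ back to a polynomial ideal in $P$ so that \thmref{ThCV08} applies. Writing $I=I_t(H+N)\cap P$ and $I'=I_t(H+N')\cap P$, the point is that when $N\in T_0(E)$ the maximal minors $f_i$ of $H+N$ are already polynomials and form a $\lex$-Gröbner basis with $\Ltlex(I)=E$ — this is exactly what \propref{CVmatrix} and \lemref{lexGrobner} (or directly the structure of $T_0(E)$ together with \thmref{ThCV08}) guarantee, so $I\in V_0(E)$ and similarly $I'\in V_0(E)$, with $\Phi_{CV}(N)=I$ and $\Phi_{CV}(N')=I'$ in the notation of \thmref{ThCV08}. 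Since $J=IR=I'R$ and both $I,I'$ are $(x,y)$-primary with $R/J\cong P/I\cong P/I'$, one concludes $I=I'$ (extension and contraction are mutually inverse for $\m$-primary ideals between $P$ localized at $\m$ and $R$, and both $I,I'$ are already contracted); then the bijectivity of Conca–Valla's $\Phi$ in \thmref{ThCV08} forces $N=N'$.

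Alternatively — and this is perhaps the cleanest route — injectivity follows directly from \remref{miniinj}: the map $\mathcal{N}(E)\to R^{t+1}$ sending $N$ to the tuple of maximal minors of $H+N$ is injective. So if $\varphi(N)=\varphi(N')=J$ with $N,N'\in\mathcal{M}(E)$, I first need to argue that the \emph{tuples} of maximal minors of $H+N$ and $H+N'$ coincide, not just the ideals they generate. This is where the $T_0(E)$ constraint does the work: for $N\in\mathcal{M}(E)$, the maximal minors of $H+N$ are the unique reduced $\otau$-enhanced standard basis of $J$ with $\Lt(f_i)=x^{t-i}y^{m_i}$ and $x^t\nmid\supp(f_i)$ for $i\geq 1$ — the degree bounds $\deg(n_{i,j})<d_j$ are precisely the reducedness conditions, so the minors are forced to be the canonical reduced basis, which depends only on $J$. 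Hence the two tuples agree and \remref{miniinj} gives $N=N'$.

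The main obstacle I anticipate is the bookkeeping in the injectivity argument: one must verify carefully that the $T_0(E)$ degree conditions on $N$ translate exactly into the tail of each maximal minor $f_i$ having no monomials in $E$ (reducedness of the standard basis), so that the tuple $(f_0,\dots,f_t)$ is an invariant of $J$ alone. This is a matter of tracking which monomials appear in the expansion of the $t\times t$ minors of $H+N$ and checking against \defref{def:setT2} and the definition of the degree matrix, rather than a conceptual difficulty — the leading-term computation is already done in \lemref{well}, and the tail analysis is analogous. Once that is in place, surjectivity via \lemref{lexGrobner}+\propref{CVmatrix} and injectivity via \remref{miniinj} assemble into the bijection.
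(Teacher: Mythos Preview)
Your approach is essentially the paper's: well-definedness from \lemref{well}, then \lemref{lexGrobner} together with \propref{CVmatrix} to handle both surjectivity and injectivity. The only difference is economy---the paper observes that the \emph{uniqueness} clause in \propref{CVmatrix} already yields injectivity outright (if $\varphi(N)=\varphi(N')=J$ then both $N,N'\in\mathcal{M}(E)$ satisfy $J=I_t(H+N)=I_t(H+N')$, and \propref{CVmatrix} says there is a unique such matrix), so your separate injectivity arguments via \thmref{ThCV08} or \remref{miniinj}, while correct in spirit, are redundant.
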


\begin{proof}
The map \(\varphi\) is well-defined by \Cref{well}. \Cref{lexGrobner} and \Cref{CVmatrix} ensure the existence of a unique matrix $N\in\mathcal{M}(E)$ such that $J=I_t(H+N)$. 
\end{proof}

Note that when $E$ is an ideal satisfying (\ref{lexGBcond}), then the set $\mathcal{M}(E)$ has a simple description. It is formed by matrices of size $(t+1)\times t$ with entries in $\res[y]$ such that 

$$n_{i,j}=
\left\lbrace\begin{array}{ll}
0, & i\leq j;\\
c_{i,j}^{v_{i,j}}y^{v_{i,j}}+c_{i,j}^{v_{i,j}+1}y^{v_{i,j}+1}+\dots+c_{i,j}^{d_j-1}y^{d_j-1}, & i > j;
\end{array}\right.
$$

where \(v_{i,j}:=\max(u_{i,j},0)\).

\begin{corollary}\label{cor:ParamLex} 
Let $E$ be the lex-segment ideal $(x^t,x^{t-1}y^{m_1},\dots,y^{m_t})$ 
(or an ideal satisfying condition (\ref{lexGBcond})) 
with degree matrix $U=(u_{i,j})$, 
\(v_{i,j}=\max(u_{i,j},0)\) and $d_j=m_j-m_{j-1}$ for any $1\leq i\leq t+1$ and $1\leq j\leq t$. 
Then $V(E)$ is an affine space of dimension $\mathbf{N}$, where 
$$\mathbf{N}=\sum_{2\leq j+1\leq i\leq t+1}\left(d_j-v_{i,j}\right).$$
\end{corollary}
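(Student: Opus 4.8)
The plan is to compute the dimension of $V(E)$ by counting the number of free parameters in the matrices of $\mathcal{M}(E)$, using that $\varphi\colon \mathcal{M}(E)\to V(E)$ is a bijection by \Cref{ParamLex}. Since $V(E)$ is in bijection with $\mathcal{M}(E)$, which (by the explicit description preceding the corollary) is cut out inside an affine space of $(t+1)\times t$ matrices over $\res[y]$ by linear conditions — namely $n_{i,j}=0$ for $i\le j$, and for $i>j$ the entry $n_{i,j}$ is an arbitrary polynomial supported in degrees $v_{i,j},v_{i,j}+1,\dots,d_j-1$ — the set $\mathcal{M}(E)$ is itself an affine space, and its dimension is the total number of free coefficients $c_{i,j}^{\ell}$.

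First I would invoke \Cref{ParamLex} to identify $V(E)$ with $\mathcal{M}(E)$ as sets; since $\mathcal{M}(E)$ is visibly a linear (hence affine) subspace of $\res[y]^{(t+1)\times t}$, this shows $V(E)$ is an affine space. Next I would count coordinates: for each pair $(i,j)$ with $1\le j<i\le t+1$ (equivalently $2\le j+1\le i\le t+1$), the entry $n_{i,j}$ contributes one parameter for each exponent in $\{v_{i,j},v_{i,j}+1,\dots,d_j-1\}$, i.e. exactly $d_j-v_{i,j}$ parameters when $d_j>v_{i,j}$ and none otherwise. Summing over all such $(i,j)$ gives $\mathbf{N}=\sum_{2\le j+1\le i\le t+1}(d_j-v_{i,j})$, where it is understood that a summand is taken to be $0$ when $d_j-v_{i,j}\le 0$ (so that an empty range of exponents contributes nothing). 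I would add a one-line remark that $v_{i,j}=\max(u_{i,j},0)\le d_j$ is automatic, or else note that the convention of discarding nonpositive summands makes the formula correct regardless.

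The only real subtlety is bookkeeping: one must make sure the range of summation $2\le j+1\le i\le t+1$ in the displayed formula exactly matches the set of positions $(i,j)$ with $i>j$ where $\mathcal{M}(E)$ allows nonzero entries, and that the convention for $d_j-v_{i,j}\le 0$ is stated so that the count of monomials $y^{v_{i,j}},\dots,y^{d_j-1}$ (which is $\max(d_j-v_{i,j},0)$) is reproduced. This is essentially the same computation Constantinescu carries out in \cite{Con11} and Conca–Valla in \cite{CV08} for their cells, adapted to the constraints defining $\mathcal{M}(E)=\mathcal{N}(E)\cap T_0(E)$; I do not expect any genuine obstacle, only the routine verification that the lower bound on $\ord(n_{i,j})$ coming from $\mathcal{N}(E)$ is $u_{i,j}$ for $i>j$, which combined with the $\res[y]$-polynomiality and degree bound $<d_j$ from $T_0(E)$ yields precisely the span of $y^{v_{i,j}},\dots,y^{d_j-1}$.

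\begin{proof}
By \Cref{ParamLex}, $\varphi\colon\mathcal{M}(E)\to V(E)$ is a bijection, so it suffices to show that $\mathcal{M}(E)$ is an affine space of dimension $\mathbf N$. From the explicit description of $\mathcal{M}(E)=\mathcal{N}(E)\cap T_0(E)$ given above, a matrix $N=(n_{i,j})\in\mathcal{M}(E)$ is determined by: $n_{i,j}=0$ whenever $i\le j$, and for $i>j$ an arbitrary polynomial of the form
\[
n_{i,j}=c_{i,j}^{v_{i,j}}y^{v_{i,j}}+c_{i,j}^{v_{i,j}+1}y^{v_{i,j}+1}+\dots+c_{i,j}^{d_j-1}y^{d_j-1},
\qquad c_{i,j}^{\ell}\in\res,
\]
where $v_{i,j}=\max(u_{i,j},0)$; this range of exponents is empty (so $n_{i,j}$ is forced to be $0$) when $v_{i,j}\ge d_j$. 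Hence $\mathcal{M}(E)$ is the affine space spanned by the coordinate functions $c_{i,j}^{\ell}$ with $1\le j<i\le t+1$ and $v_{i,j}\le \ell\le d_j-1$, and its dimension is the number of such triples $(i,j,\ell)$. For fixed $(i,j)$ with $j<i$ this number is $\max(d_j-v_{i,j},0)$, i.e. $d_j-v_{i,j}$ when positive and $0$ otherwise. Summing over all positions $(i,j)$ with $j<i$, which is the same as $2\le j+1\le i\le t+1$, we obtain
\[
\dim_\res V(E)=\dim_\res\mathcal{M}(E)=\sum_{2\le j+1\le i\le t+1}\bigl(d_j-v_{i,j}\bigr)=\mathbf N,
\]
with the convention that summands with $d_j-v_{i,j}\le 0$ are taken to be zero.
\end{proof}
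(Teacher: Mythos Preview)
Your proof is correct and is exactly the argument the paper intends: the corollary is stated without proof because it follows immediately from \Cref{ParamLex} together with the explicit description of $\mathcal{M}(E)$ given just before the statement, and you have simply written out that parameter count. Your added remark about the convention $d_j-v_{i,j}\le 0\Rightarrow 0$ is a genuine improvement over the paper's statement, since for non--lex-segment ideals satisfying (\ref{lexGBcond}) one can have $d_j=0$ (so $v_{j+1,j}=1>d_j$) and the summand would otherwise be negative.
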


Let us show the details of the parametrization of the Gr\"obner cell $V(E)$ as an affine space $\mathbb{A}_\res^\mathbf{N}$ with an example:

\begin{example}\label{ex:Paramlex}{\emph{Gr\"obner cell of a lex-segment ideal.}} Consider the lex-segment ideal $L=(x^3,x^2y,xy^3,y^5)$. 
By \Cref{ParamLex}, any canonical Hilbert-Burch matrix $M=H+N$, with $N \in \mathcal{M}(L)$, associated to an ideal $J \in V(L)$ is of the form
$$M=\left(\begin{array}{ccc}
y & 0 & 0\\
-x & y^2 & 0\\
c_{3,1}^0 & -x+c_{3,2}^1y & y^2\\
c_{4,1}^0  & c_{4,2}^0+c_{4,2}^1y  & -x+c_{4,3}^1y
\end{array}\right).$$

We identify any ideal $J=I_3(M)$ with the point 
$$p_J=(c_{3,1}^0,c_{4,1}^0,c_{3,2}^1,c_{4,2}^0,c_{4,2}^1,c_{4,3}^1)\in\mathbb{A}_\res^6.$$

In other words, $V(L)$ can be identified with the affine space $\mathbb{A}_\res^6$. Note that the point at the origin in $\mathbb{A}^6_\res$ corresponds to the monomial ideal $L$.
\end{example}

\medskip
\begin{corollary} \label{Cor:hilbertfunction} 
Assume $\ch(\res)=0$ and let $h$ be an admissible Hilbert function. 
Let $L=\Lex(h)$ be the unique lex-segment ideal such that $\HF_{R/L}=h$.
Then any ideal $J\subset R$ such that $\HF_{R/J}=h$ is of the form $I_t(H+N)$, for some $N\in \mathcal{M}(L)$, after a generic change of coordinates.
\end{corollary}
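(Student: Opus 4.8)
The plan is to reduce \Cref{Cor:hilbertfunction} to \Cref{ParamLex} via a generic change of coordinates, using the characteristic-zero hypothesis to control the leading term ideal of a generic rotation. First I would recall that in $\res[\![x,y]\!]$ (equivalently in the relevant localization of $\res[x,y]$) the leading term ideal $\Lt(J)$ with respect to $\otau$ depends only on the initial forms of the elements of $J$, and that $\Lt(J)$ is determined, among all monomial ideals with Hilbert function $h=\HF_{R/J}$, by being the \emph{largest} in a suitable sense once we move to generic coordinates. More precisely, the standard fact (an analogue of the Galligo–Bayer–Stillman theorem on generic initial ideals, valid here because $\otau$ is a degree-compatible order and we are in characteristic zero) is that for a generic $g\in\mathrm{GL}_2(\res)$ the leading term ideal $\Lt(g\cdot J)$ is constant, Borel-fixed, and hence, in two variables with a lex-type tiebreaker, equal to the lex-segment ideal $\Lex(h)$. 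This is the only place the characteristic-zero assumption is used.

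The key steps, in order, would be: (1) State and invoke the genericity result: there is a nonempty Zariski-open $U\subseteq \mathrm{GL}_2(\res)$ such that $\Lt(g\cdot J)$ is independent of $g\in U$; call this common monomial ideal $E_{\mathrm{gin}}$. (2) Identify $E_{\mathrm{gin}}$ with $\Lex(h)$: since $\Lt(g\cdot J)$ is stable under the generic action it is Borel-fixed, and in two variables the only Borel-fixed monomial ideal with Hilbert function $h$ is the lex-segment ideal $\Lex(h)=L$; note $\HF_{R/g\cdot J}=\HF_{R/J}=h$ is preserved by the linear change of coordinates, so the Hilbert function matching is automatic. (3) Having fixed $g\in U$, we have $g\cdot J\in V(L)$ in the notation of \Cref{def:setV}. (4) Apply \Cref{ParamLex}: since $L=\Lex(h)$ is a lex-segment ideal, the map $\varphi\colon \mathcal{M}(L)\to V(L)$ is a bijection, so there is a (unique) $N\in\mathcal{M}(L)$ with $g\cdot J=I_t(H+N)$, where $H$ is the canonical Hilbert-Burch matrix of $L$. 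This is exactly the asserted conclusion.

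The main obstacle is step (1)–(2): establishing the existence and Borel-fixedness of the generic leading term ideal for the local degree order $\otau$ on $\res[\![x,y]\!]$. In the polynomial graded setting this is classical (generic initial ideals exist and are Borel-fixed by Galligo's theorem, with characteristic zero needed for the "Borel-fixed $\Rightarrow$ strongly stable $\Rightarrow$ lex-segment in two variables" chain). For $\otau$ one reduces to the graded case by passing to associated graded rings: $\Lt(J)$ with respect to the degree-compatible order $\otau$ equals $\Lt(J^\ast)$ where $J^\ast$ is the ideal of initial forms, and $\mathrm{GL}_2$ acts on $J^\ast$ just as in the graded case, so the classical generic initial ideal theory applies verbatim. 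I would cite the standard references (Galligo, Bayer–Stillman, or Eisenbud's book) for this and spend the bulk of the proof making the dictionary "$\Lt_{\otau}$ of the local ideal $=$ graded generic initial ideal of $J^\ast$" explicit, since that is the one genuinely new point; everything after that is a direct appeal to \Cref{ParamLex}. A secondary, purely cosmetic point to address is that \Cref{ParamLex} is stated for $R=\res[\![x,y]\!]$ while one might prefer to phrase the change of coordinates in $\res[x,y]$ first and then extend; the remark following \Cref{def:setN} and the isomorphism $R/J\cong P/(J\cap P)$ already reconcile the two viewpoints, so no extra work is needed there.
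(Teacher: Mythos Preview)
Your proposal is correct and takes essentially the same approach as the paper: the paper's proof simply invokes the local generic initial ideal $\Gin_{\otau}(J)$ from \cite[Theorem--Definition~1.14]{Ber09}, asserts $\Gin_{\otau}(J)=\Lex(h)$, and then applies \Cref{ParamLex}. Your steps (1)--(4) unpack precisely this, and your reduction of the local Gin to the graded case via $J^\ast$ is essentially the content of the cited result of Bertella.
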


\begin{proof}
It follows from \Cref{ParamLex} and the fact that for any $J\subset R$ such that $\HF_{R/J}=h$ it holds $\Lex(h)=\Gin_\otau(J)$. Here $\Gin_\otau(J)$ is the extension to the local case defined in \cite[Theorem–Definition~1.14]{Ber09} of the usual notion of generic initial ideal.
\end{proof}

\begin{example}\label{ex:CV-vs-local}\emph{Two stratifications of $\Hilb^3(\res[\![x,y]\!])$.}
There are three monomial ideals of colength 3 in two variables: $E_1=(x,y^3)$, $E_2=(x^2,xy,y^2)$ and $E_3=(x^3,y)$. The punctual Hilbert scheme $\Hilb^3(\res[\![x,y]\!])$ can be stratified into three corresponding Gr\"obner cells that depend on the term ordering that we choose. The following table describes the ideals that we find in each Gr\"obner cell with respect to the lexicographical term odering, namely $V_2(E_i)$, and the induced local term ordering, namely $V(E_i)$, with $i=1,2,3$. Recall that $V_2(E_i)$ is the affine space in Conca-Valla parametrization introduced in \Cref{V2} that only considers $\m$-primary ideals in the polynomial ring, hence it provides a proper stratification of $\Hilb^3(\res[\![x,y]\!])$. 

\medskip

\begin{center}
\begin{tabular}{c||c|c|c}
 $E_i$ & $E_1=(x,y^3)$  & $E_2=(x^2,xy,y^2)$ & $E_3=(x^3,y)$\\
\hline
\hline
$\HF_{R/E_i}$ & $(1,1,1)$ & $(1,2)$ & $(1,1,1)$\\
\hline
\hline
$\otau$ & $J=(x,y^3+c_2y^2+c_1y)$ & $J=(x^2,xy,y^2)$ & $J=(x^3,y+cx^2)$\\
\hline
 $\tau=\lex$  & $I=(x,y^3+c_2y^2+c_1y)$   &  $I=(x^2+cy,xy,y^2)$ & $I=(x^3,y)$
\end{tabular}
\end{center}

\medskip

The second row of the table displays the Hilbert function of the local ring $(R/E_i,\n)$ as a sequence of natural numbers such that the element in position $t$ (starting at position 0) is the dimension of the $\res$-vector space $\n^t/\n^{t+1}$ and zero-dimensional vector spaces are omitted.

Consider an ideal $I\in V_2(E_2)$ with $c\neq 0$ and note that $y$ belongs to the initial ideal $I^{\ast}$. 
Therefore $E_2=\Ltau(I)\neq \Ltau(I^{\ast})$, hence the Hilbert functions of the local rings $P/I$ and $P/E_2$ differ.
In other words, the Gr\"obner cell $V_2(E_2)$ contains ideals with different Hilbert functions.

On the other hand, the local Gr\"obner cell $V(E_2)$ consists of a single point. 
By construction, such cells will always preserve the Hilbert function. In this sense we say that the parametrization given in \Cref{ParamLex} is compatible with the local structure. 
\end{example}

In the general case, we have a surjective map $ \varphi: \mathcal{N}(E)_{\le s} \to V(E)$.
Restricting to \(\mathcal{M}(E)\) we get an injection to \(V(E)\), but if \(E\) does not satisfy condition (\ref{lexGBcond}) the map \(\varphi\) is not surjective anymore.

\begin{lemma} \label{nonsurj}
If \(E\) does not satisfy condition (\ref{lexGBcond}), then there exists
\(J \in V(E)\) such that \(\Ltlex(J \cap P) \not= E\).
\end{lemma}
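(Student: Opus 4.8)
The plan is to exhibit a concrete ideal $J \in V(E)$ whose intersection with $P$ has a strictly larger leading term ideal with respect to $\lex$. Since $E$ fails condition (\ref{lexGBcond}), there exist indices $j < i$ with $m_j - j - 1 > m_i - i$, equivalently $m_j - j \ge m_i - i + 2$. Fix such a pair, and in fact choose $j$ and $i$ so that $i - j$ is minimal among all violating pairs; this minimality will let me control the interaction between the deformation I introduce and the other generators. The key observation, reversing the argument of Case II in the proof of \Cref{lexGrobner}, is that the inequality $m_j - j \ge m_i - i + 2$ is exactly what allows a monomial $x^{t-j}y^\ell$ with $m_i - i + 1 \le \ell - j \le m_j - j - 1$ to sit strictly above $x^{t-i}y^{m_i}$ in the $\lex$ order while still lying strictly above it in degree as well (so it is $\otau$-smaller), and moreover $x^{t-j}y^\ell \notin E$ because $\ell < m_j$ and $\ell$ is too small to be a multiple of any generator $x^{t-k}y^{m_k}$ with $k \le j$ — here is where minimality of $i-j$ and the monotonicity $m_0 < m_1 \le \dots \le m_t$ are used.

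First I would write down the matrix $M = H + N$ where $N \in \mathcal{M}(E)$ has a single nonzero entry: $n_{i,j} = y^{v_{i,j}}$ with $v_{i,j} = \max(u_{i,j}, 0)$, placed in a position with $i > j$ so that $N$ is strictly lower triangular (one checks $u_{i,j} = m_j - m_{i-1} + i - j$; the relevant exponent range is nonempty precisely by the violating inequality). By \Cref{well}, $J := I_t(M) \in V(E)$, and by \Cref{miniinj} the maximal minors $f_0, \dots, f_t$ of $M$ are a $\otau$-enhanced standard basis with $\Lt(f_k) = x^{t-k}y^{m_k}$. Next I would compute, via the Laplace expansion of the minor $f_{i-1}$ (or of a neighbouring minor, whichever exposes the deformation), that one of these generators — call it $f_{k}$ — has in its support the monomial $x^{t-j}y^{\ell}$ described above, produced by multiplying the entry $y^{v_{i,j}}$ against a product of $-x$'s and pure $y$-powers along the appropriate diagonal, so that $\ell = m_{i-1} + v_{i,j} + (\text{diagonal contribution})$ and $x^{t-j}$ appears. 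Since this monomial is $\lex$-larger than $\Lt(f_k) = x^{t-k}y^{m_k}$, we get $x^{t-j}y^\ell \in \Ltlex(f_k) \subseteq \Ltlex(I)$ where $I = J \cap P$; and since $x^{t-j}y^\ell \notin E$, this shows $\Ltlex(I) \supsetneq E$, which is the claim.

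The main obstacle I anticipate is purely bookkeeping: verifying that the deformation entry $y^{v_{i,j}}$, after being multiplied into a maximal minor, actually survives as the genuine $\lex$-leading term of that minor rather than being dominated or cancelled by another term of $f_k$ coming from a different permutation in the determinant expansion. Controlling this requires checking that no other monomial in $\supp(f_k)$ is $\lex$-larger; this is where I would use that $N$ has exactly one nonzero entry, so every other term of $f_k$ is a product of entries of $H$ alone and hence is a monomial that — being in $E$ or being $\Lt(f_k)$ itself — is $\lex$-smaller than $x^{t-j}y^\ell$ by construction of $\ell$. A secondary point to be careful about is choosing which minor to expand: the deformed entry $n_{i,j}$ affects $f_k$ only for $k \ne i-1$ (since row $i$ is deleted when forming $f_{i-1}$), so I would take, say, $k = j-1$ or another index for which the submatrix $[M]_{k+1}$ genuinely contains $n_{i,j}$ and the surrounding diagonal structure yields the power $x^{t-j}$; a short case analysis, essentially the mirror image of Case II in \Cref{lexGrobner}, pins down the right choice.
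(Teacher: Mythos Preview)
There is a genuine gap: your deformation sits in the strictly \emph{lower} triangle of $N$, and any such deformation necessarily gives $\Ltlex(J\cap P)=E$, the opposite of what you need.

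First, a self-contradiction. You claim $N\in\mathcal{M}(E)=\mathcal{N}(E)\cap T_0(E)$. But $T_0(E)$ is precisely Conca--Valla's parametrizing set for $V_0(E)$ (Theorem~\ref{ThCV08}): for every $N\in T_0(E)$ one has $\Ltlex\bigl(I_t(H+N)\bigr)=E$. So if your $N$ really lies in $\mathcal{M}(E)$, then $\Ltlex(J\cap P)=E$ automatically, and there is nothing to prove \emph{against}. (Separately, the claim that the exponent range $[v_{i,j},d_j-1]$ is nonempty ``precisely by the violating inequality'' is false: for the paper's own example $E=(x^6,xy^2,y^8)$ with the violating pair $(i,j)=(5,1)$ one gets $u_{5,1}=4$ while $d_1=2$.) Even relaxing to $N\in\mathcal{N}(E)$ does not help. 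A single entry $n_{i,j}=y^{e}$ with $i>j$ and $e\ge u_{i,j}$ contributes to the minor $f_k$ the correction $\pm\, y^{e}\cdot\det[H]_{(k+1,i),j}$. A direct computation with the bidiagonal structure of $H$ shows this cofactor either vanishes (when row $i$ and column $j$ lie in different diagonal blocks of $[H]_{k+1}$, or both in the upper block) or equals $\pm\, x^{\,t-k-(i-j)}\,y^{\,m_k+m_{i-1}-m_j}$. In every nonzero case the $x$-exponent is $t-k-(i-j)<t-k$, so the correction is strictly $\lex$-smaller than the main term $x^{t-k}y^{m_k}$, and $\Ltlex(f_k)=x^{t-k}y^{m_k}$ for all $k$. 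Concretely, for $E=(x^6,xy^2,y^8)$ and $n_{5,1}=y^{4}$ one finds $f_0=x^6-x^2y^4$ and all other $f_k$ monomial, whence $J=E$.

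The paper's construction is essentially the mirror image of yours, and that mirroring is the whole point. It deforms the \emph{generator} $f_i$ by adding a monomial with \emph{higher} $x$-power, namely $f_i=x^{t-i}y^{m_i}+x^{t-j}y^{m_j-1}$ (after adjusting $i,j$ so that $d_j\ge 1$ and $d_{i+1}\ge 1$). Since $j<i$, the added term is $\lex$-larger and lies outside $E$, while the degree inequality coming from the violation of~(\ref{lexGBcond}) keeps it $\otau$-smaller. At the matrix level this corresponds to entries in the \emph{upper} triangle of $N$, at positions $(j,i)$ and $(j+1,i+1)$; one then checks the $S$-polynomials directly to confirm $\{f_0,\dots,f_t\}$ is a $\otau$-enhanced standard basis. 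Your ``mirror image of Case~II in \Cref{lexGrobner}'' intuition is exactly right, but it points to deforming $f_i$ upward in $x$-power, which forces the matrix deformation above the diagonal, not below.
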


\begin{proof}
Since condition (\ref{lexGBcond}) is not satisfied, there exist \(j<i\) 
such that 
\begin{equation}\label{eq:not}
m_j-j-1 > m_i-i.     
\end{equation}
Take \(i'= \max\{l \ | \ m_i=m_l\}\)
and \(j' = \min\{l \ | \ m_j = m_l\}\), then  
\[m_{j'}-j' -1 > m_j-j-1 > m_i-i > m_{i'}-i'.\]
Replace \(i\) with \(i'\) and \(j\) with \(j'\).
Note that (\ref{eq:not}) still holds and now additionally \(d_j\ge 1\) and \(d_{i+1} \ge 1\).

Set \(f_k = x^{t-k}y^{m_k}\) for \(k \in \{0,\dots,t\}\setminus \{i\}\) and
\(f_i = x^{t-i}y^{m_i} + x^{t-j}y^{m_j-1}\). Consider the ideal
\(J = (f_0,\dots, f_t)\) of $R$.
Clearly, \(\Ltlex(f_i) = x^{t-j}y^{m_j-1} \notin E\), thus 
\(\Ltlex(J \cap P) \not= E\). 

Now we need to prove that $\Lt(J)=E$.
From (\ref{eq:not}) we have \(t-i+m_i<t-j+m_j-1\), so
\(\Lt(f_i) = x^{t-i}y^{m_i}\). 
The polynomial \(f_i\) cannot be reduced by the other (monomial) generators.

The \(S\)-polynomials are 
\[
S_l = \begin{cases}
-x^{t-j+1}y^{m_j-1}, & l = i; \\
x^{t-j} y^{m_j-1+d_{i+1}},  & l = i+1; \\
0, & \mbox{otherwise.}
\end{cases}
\]

If $i<t$, check that \(S_{i} = y^{d_j-1} f_{j-1}\) and \(S_{i+1} = y^{d_{i+1}-1} f_j \).
Then the matrix \(N\) has only two non-zero entries \(n_{j,i}=y^{d_j-1}\) and
\(n_{j+1,i+1}=-y^{d_{i+1}-1}\).
If \(i=t\) there is only one non-zero \(S\)-polynomial. In any case, one can check that \(N \in \mathcal{N}(E)\).
Thence, \(\{f_0,\dots,f_t\}\) forms a \(\otau\)-enhanced standard basis and
\(J \in V(E)\).
\end{proof}

\begin{example} {\emph{$\mathcal{M}(E)\rightarrow V(E)$ not surjective.}}
Consider $E=(x^6,xy^2,y^8)$ as in \Cref{Ex:step1}.  
\(E\) does not satisfy condition (\ref{lexGBcond}) because for \((i,j)=(5,1)\) we have \(m_1-1-1 = 0 > m_5-5 = 2-5 = -3\). The ideal \(J\) from \Cref{nonsurj} in this case is generated by the monomials \(x^{6-k}y^{m_k}\) for \(k=0,\dots,4,6\)
and \(xy^2+x^5y\). 
\(J \cap P\notin V_0(E)\) because $\Ltlex(J \cap P)=(x^6,x^5y,x^2y^2,xy^3,y^8)$. 
Therefore, \(J \notin \varphi(\mathcal{M}(E))\).
\end{example}

Several computations
suggest us how to replace $\mathcal{N}(E)$ in \Cref{Param} in order to obtain a bijection.

We define the subset \((\res[y]_{<\underline{d}})^{(t+1) \times t} \subset \res[y]^{(t+1) \times t}\)
as matrices where the entries satisfy the following degree conditions: \[
\deg(n_{i,j}) < \begin{cases} 
d_i, & i\le j;\\
d_j, & i > j. 
\end{cases}
\]

\begin{conjecture}\label{Conj}
Let $E$ be a monomial ideal. Then the set $\mathcal{N}(E)_{< \underline{d}}:= \mathcal{N}(E) \cap (\res[y]_{<\underline{d}})^{(t+1) \times t}$
parametrizes $V(E)$.
\end{conjecture}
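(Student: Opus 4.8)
The plan is to prove \Cref{Conj} by reducing, via a localization/degree-truncation argument, to the two results already available: the surjectivity statement \Cref{polysurj} and the injectivity mechanism of \Cref{miniinj}. First I would show that $\mathcal{N}(E)_{<\underline{d}} \subset \mathcal{N}(E)_{\le s}$, so that $\varphi$ restricted to $\mathcal{N}(E)_{<\underline d}$ is at least well-defined into $V(E)$; here the key observation is that the strict degree bounds $\deg(n_{i,j}) < d_i$ (resp. $<d_j$) together with the order bounds from \Cref{def:setN} force every entry to have degree at most the socle degree $s$ of $R/E$, since the socle degree is governed by $m_t$ and the diagonal degrees $d_i$. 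Then the real content splits into surjectivity and injectivity of this restriction.

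For \textbf{surjectivity}, I would take $J \in V(E)$ and, by \Cref{polysurj}, write $J = I_t(H+N)$ with $N \in \mathcal{N}(E)_{\le s}$ having polynomial entries. The task is to replace $N$ by a matrix $N' \in \mathcal{N}(E)_{<\underline d}$ with $I_t(H+N') = J$. This should be done by an iterated \emph{reduction move} exactly in the spirit of the proof of \Cref{CVmatrix}, but now applied in \emph{both} the strictly-lower-triangular and the upper-triangular regions: whenever an entry $n_{i,j}$ violates its degree bound (degree $\ge d_i$ in the region $i \le j$, or $\ge d_j$ in the region $i>j$), split off the offending high-degree part as $y^{d_\bullet} q_{i,j}$ and perform the corresponding row/column operation on $H+N$, which leaves $J = I_t(H+N)$ unchanged. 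One orders the moves (last column to first, as in \cite{Con11}) so that each move strictly decreases a suitable complexity measure — e.g. the lexicographically ordered multiset of violating entries — and the degree bounds, once achieved in a given column, are not destroyed by later moves. The bookkeeping here is identical to the "technicality" invoked in \Cref{CVmatrix}: the order bounds on $q_{i,j}$ guarantee the new entries still lie in $\mathcal{N}(E)$. Since $N$ starts with polynomial entries of bounded degree, this terminates.

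For \textbf{injectivity}, I would argue as in \Cref{miniinj} and the first paragraph of \cite[3.2]{Con11}: if $N, N' \in \mathcal{N}(E)_{<\underline d}$ give the same ideal $J$, then — after checking that both produce the \emph{same} system $\{f_0,\dots,f_t\}$ of maximal minors as a $\otau$-enhanced standard basis — the columns of $N-N'$ are syzygies of $\{f_0,\dots,f_t\}$, and because the $\Lt(f_i)=x^{t-i}y^{m_i}$ involve distinct powers of $x$ while $N-N'$ has entries in $\res[y]$, every column of $N-N'$ must vanish. The subtle point is to show the maximal minors of $H+N$ and $H+N'$ actually coincide; for ideals satisfying (\ref{lexGBcond}) this is \Cref{CVmatrix}, and in the general case one needs the strict degree bounds to pin down the reduced $\otau$-enhanced standard basis uniquely — the normal form of each $f_i$ against $E$ has its tail terms confined to exactly the monomial support encoded by the $<\underline d$ conditions. \textbf{I expect this uniqueness-of-standard-basis step to be the main obstacle}: without condition (\ref{lexGBcond}) one loses the bridge to Conca--Valla's polynomial parametrization, so one must instead show directly that the degree bounds $\deg(n_{i,j}) < d_i, d_j$ cut out exactly a set of "reduced" matrices, i.e. that no reduction move is possible within $\mathcal{N}(E)_{<\underline d}$ and that two distinct such matrices cannot be connected by the (ideal-preserving) move groupoid. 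This is precisely where the conjectural nature of the statement resides, and a complete proof would require either a new normal-form argument for standard bases in $R$ or a careful analysis showing the reduction moves act freely and transitively on fibers of $\varphi$.
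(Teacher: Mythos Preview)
The statement you are attempting to prove is a \emph{conjecture}: the paper does not prove it, and in fact the Remark immediately following \Cref{Conj} explains precisely why your proposed surjectivity argument via reduction moves breaks down. So there is no ``paper's own proof'' to compare against; instead, the paper identifies the exact obstacles your plan would hit.

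Concretely, your surjectivity step assumes that starting from $N \in \mathcal{N}(E)_{\le s}$ with polynomial entries, iterated reduction moves terminate in a matrix in $\mathcal{N}(E)_{<\underline d}$. The paper points out two reasons this fails for general $E$. First, an $(i,j)$-reduction move can turn polynomial entries into genuine power series whenever the relevant diagonal entry is nonzero, so your ``$N$ starts with polynomial entries of bounded degree, hence terminates'' argument is invalid: the complexity measure you propose is not well-defined after the first such move. Second, even if one begins with zeros on the diagonal, general reduction moves create nonzero diagonal entries, and there is \emph{no} reduction move that decreases a diagonal entry. The paper's \Cref{ex:polysurj2} and \Cref{ex:noninj} give an explicit pair $\overline N, N' \in \mathcal{N}(E)_{\le s}$ with $\varphi(\overline N)=\varphi(N')$ and $N' \in \mathcal{N}(E)_{<\underline d}$, yet $N'$ is \emph{not reachable from $\overline N$ by any sequence of these reduction moves}. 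So the move groupoid simply does not act transitively on fibers, contrary to what your plan needs.

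Your injectivity sketch also contains a gap you partly flag but understate. \Cref{miniinj} says that the map $N \mapsto (\text{minors of } H+N)$ is injective, not that $\varphi$ is; two matrices in $\mathcal{N}(E)$ can give the same ideal with \emph{different} systems of minors (again \Cref{ex:noninj}). So the step ``after checking that both produce the same system $\{f_0,\dots,f_t\}$'' is exactly the missing content, and the degree bounds $<\underline d$ are not known to force any canonical reduced form of the standard basis in the absence of condition~(\ref{lexGBcond}). In short, both halves of your argument rest on properties of reduction moves that the paper explicitly shows do not hold in the general case; this is why the statement remains a conjecture.
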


For any ideal $E$ satisfying condition (\ref{lexGBcond}) the sets $\mathcal{N}(E)_{< \underline{d}}$ and $\mathcal{M}(E)$ coincide. By \Cref{ParamLex}, the conjecture is true for such $E$, which includes lex-segment ideals. 
For general \(E\), we have an inclusion
\(\mathcal{M}(E) \subset \mathcal{N}(E)_{< \underline{d}}\).
Moreover, the matrix \(N\) constructed in the proof of 
\Cref{nonsurj}, which is not in \(\mathcal{M}(E)\), can also be transformed to a matrix in 
\(\mathcal{N}(E)_{< \underline{d}}\) via reduction moves.

\begin{example}{\emph{$E$ not satisfying (\ref{lexGBcond}) where \Cref{Conj} holds.}} Consider the monomial ideal $E=(x^4,y^2)$. It can be proved that any $J\in V(E)$ is of the form $J=(x^4+ax^3y,y^2+bx^3+cx^3y+dx^2y)$. The $S$-polynomials of the standard basis
$$\begin{array}{l}
f_0=x^4+ax^3y\\
f_1=x^3y^2\\
f_2=x^2y^2\\
f_3=xy^2+(d-ab)x^3y+(ad-a^2b)x^2y^2\\
f_4=y^2+bx^3+cx^3y+dx^2y+(ad-a^2b)xy^2+(a^3b^2-2a^2bd+ad^2+c)x^3y
\end{array}$$

\noindent
of $J$ give the matrix $M=H+N$, with $N\in\mathcal{N}(E)_{< \underline{d}}$ and $I_4(M)=J$, satisfying the conjecture:
$$M=\left(\begin{array}{cccc}
y^2  &  0 & (d-ab)y & b+(a^3b^2-2a^2bd+ad^2+c)y\\
-x-ay & 1 & 0 & 0\\
0 & -x & 1 & 0\\
0 & 0 & -x & 1\\
0 & 0 & 0 & -x
\end{array}\right).$$
\end{example}

\section{Applications to the construction of Gorenstein rings}\label{S:Gorenstein}

Let us assume that $\res$ is a field of characteristic 0. The explicit description of the affine variety $V(L)$ given by \Cref{ParamLex} allows us to parametrize Gorenstein rings $R/J$ with a given Hilbert function $h$ up to a generic change of coordinates. It is enough to consider those Gorenstein ideals $J$ that arise as a deformation of the unique lex-segment ideal $L=\Lex(h)$ associated to $h$. We will now see that the subset $V_G(L)$ of all Gorenstein ideals in $V(L)$ has the structure of a quasi-affine variety. Note that in codimension 2, the Gorenstein condition is equivalent to being a complete intersection ideal. 

\begin{proposition}\label{rkCriteria} Let $L$ be a lex-segment ideal and let $J$ be an ideal with $\Lt(J)=L$. Let $H$ and $M=H+N$ be the canonical Hilbert-Burch matrices of $L$ and $J$, respectively. Then $J$ is Gorenstein if and only if 
the entries \(n_{i+2,i}\) for \(i=1,\dots, t-1\) of $N$ 
consist of polynomials in $y$ with non-zero constant terms. 
\end{proposition}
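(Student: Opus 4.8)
The plan is to reduce the Gorenstein condition on $R/J$ to a rank/invertibility condition on the Hilbert-Burch matrix $M=H+N$, using the standard fact that for a codimension-two ideal with Hilbert-Burch matrix $M$, the ring $R/J$ is Gorenstein if and only if $\operatorname{coker}(M)$ is cyclic, equivalently if and only if $I_{t-1}$ of a "short" submatrix has the right colength, equivalently $t=2$ — no wait, in general one uses: $R/J$ Gorenstein $\iff$ the number of generators of $J$ is three (i.e. $t=2$) OR, more usefully here, one works with the dual. The cleanest route is via the canonical module: $\omega_{R/J}\cong \operatorname{coker}(M^{\mathrm{tr}})$ up to twist, and $R/J$ is Gorenstein iff this is principal, i.e. iff $M^{\mathrm{tr}}$ (which is $t\times(t+1)$) has a $1\times t$ or $t\times t$ "corner" that is a unit. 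Concretely, I would use the criterion that $R/J$ is Gorenstein iff $J$ has exactly two "essential" generators in a suitable sense, reflected in the submatrices of $M$; for codimension two this boils down to: after deleting one row, the resulting $t\times t$ matrix has determinant a unit times a monomial, but that is $\Lt(f_i)$ and is never a unit. So instead the right statement is the one about the second syzygy: $R/J$ Gorenstein $\iff$ $M$ becomes, after row/column operations over $R$, a matrix whose $(t-1)\times(t-1)$ minors generate the unit ideal in all but one spot — this is exactly a condition on the superdiagonal entries.

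Here is the concrete plan. First I recall the structure of $H$: it is bidiagonal with $y^{d_i}$ on the main diagonal and $-x$ on the subdiagonal, all other entries zero. Adding $N\in\mathcal{M}(E)$ (strictly lower triangular, with order bounds $\operatorname{ord}(n_{i,j})\ge u_{i,j}$ for $i>j$) keeps $M$ lower triangular in the relevant sense. The "third main diagonal" of $N$ means the entries $n_{i+2,i}$ for $1\le i\le t-1$ (the diagonal two steps below the main one). Second, I would invoke the classical result (Buchsbaum-Eisenbud, or its local analogue as in \cite{Ber09}) that $R/J$ is Gorenstein precisely when $t$ is arbitrary but the Hilbert-Burch matrix is equivalent to a matrix that is "alternating after deleting a row and column," or — the version I will actually use — $R/J$ is Gorenstein iff $\beta_2(R/J)=1$ iff the minimal number of generators of the first syzygy of the canonical module is $1$. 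Third, and this is the computational heart, I would perform elementary row and column operations over $R$ on $M$: using the units $y^{d_j}$ plus corrections on the main diagonal (note: for $N\in\mathcal{M}(E)$ the diagonal entries $y^{d_j}+n_{j,j}$ — but $n_{j,j}=0$ since $N$ is strictly lower triangular, so the diagonal is exactly $y^{d_j}$, which is \emph{not} a unit when $d_j\ge 1$). This forces me to track which $d_j$ are zero versus positive: when $m_i=m_{i-1}$ we have $d_i=0$ and the diagonal entry is $1$, a unit, and such rows/columns can be cleared. After clearing all rows/columns with $d_j=0$, what remains is an honest $(\text{something})\times(\text{something})$ matrix in which the surviving superdiagonal (the "third diagonal" after the clearing) governs Gorensteinness, and the condition becomes exactly: each relevant entry has a nonzero constant term, i.e. is a unit.

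The main obstacle I anticipate is bookkeeping the reduction to the lex-segment case — for a genuine lex-segment ideal all $d_j\ge 1$, so no rows get cleared, and I must argue directly that $R/J$ Gorenstein forces the $t\times t$ submatrices obtained by deleting the first or last row to have a specific form: deleting the last row gives a matrix whose determinant is $f_t$ with $\Lt(f_t)=y^{m_t}$, and this matrix is itself upper-bidiagonal-like, so its cokernel being the canonical module forces cyclicity, which — by computing $\operatorname{Fitt}_1$ of that cokernel — translates into the entries $n_{i+2,i}$ being units. The delicate point is showing the equivalence is an "if and only if" and not merely "if": for the forward direction I will use that a non-unit entry on the third diagonal produces an extra minimal generator of the canonical module (visible in the graded pieces of $\operatorname{coker}(M^{\mathrm{tr}})$, computed via the associated graded with respect to $\Lt$, since $\{f_i\}$ is a standard basis), contradicting Gorensteinness; for the backward direction, if all third-diagonal entries are units one does Gaussian elimination using them to reduce $M$ to a $3\times 2$-type block (or directly verifies $\beta_2=1$), giving Gorenstein. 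I expect the forward direction, and in particular making precise the passage to the associated graded to count minimal generators of $\omega_{R/J}$, to be the step requiring the most care.
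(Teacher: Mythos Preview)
Your canonical-module route would work, but you are circling around a one-line reduction that turns this into a three-sentence proof. The paper's argument: in codimension two, Gorenstein is equivalent to complete intersection, hence to $J$ being minimally generated by two elements; since the maximal minors of $M$ generate $J$ with syzygies given by the columns of $M$, the minimal number of generators is $(t+1) - \rk(\overline{M})$, where $\overline{M}$ is $M$ reduced modulo $\m$ (the paper cites \cite[Lemma~2.1]{Ber09}); hence $J$ is Gorenstein iff $\rk(\overline{M}) = t-1$. One then simply observes that, because $L$ is lex-segment (all $d_j \ge 1$) and $N \in \mathcal{M}(L)$ is strictly lower triangular with $\ord(n_{j+1,j}) \ge u_{j+1,j} = 1$, rows $1,2$ and column $t$ of $\overline{M}$ vanish, and the remaining $(t-1)\times(t-1)$ block (rows $3$ through $t+1$, columns $1$ through $t-1$) is lower triangular over $\res$ with diagonal entries exactly the constants $c^0_{3,1}, \dots, c^0_{t+1,t-1}$. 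That block has full rank iff the product $c^0_{3,1}\cdots c^0_{t+1,t-1}$ is nonzero.

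Your approach via $\omega_{R/J} \cong \coker(M^{\mathrm{tr}})$ lands on the same rank condition (cyclic iff $t - \rk(\overline{M}) = 1$), so it is correct, just longer. The associated-graded computation, Fitting ideals, and Gaussian elimination over $R$ that you anticipate needing are all unnecessary: minimal generators are counted after tensoring with the residue field, and $\overline{M}$ is already a scalar matrix whose rank is visible from its triangular shape.
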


\begin{proof} In codimension 2, $J$ is Gorenstein if and only if it is minimally generated by 2 elements. Let $\overline{M}$ be the matrix whose entries are the classes of the entries of $M$ in $R/\m$. By \cite[Lemma 2.1]{Ber09}, $J$ is Gorenstein if and only if  $\rk(\overline{M})=t-1$. It can be checked easily that this is equivalent to $c_{3,1}^0 c_{4,2}^0\cdots c_{t+1,t-1}^0\neq 0$, where $c_{i,i-2}^0$ is the constant term of the entry $n_{i,i-2}$ of $N$.
\end{proof}

\begin{remark} \Cref{rkCriteria} provides a method of determining whether a lex-segment ideal $L$ admits Gorenstein deformations by looking at the degree matrix $U$ of the canonical Hilbert-Burch matrix $H$ of $L$. Gorenstein ideals are admissible if and only if $u_{i,i-2}\leq 0$ for any $3\leq i\leq t+1$. See \cite{Ber09} for details on what the admissible Hilbert functions for Gorenstein rings of codimension 2 are. 
\end{remark}

\begin{example}\label{ex:VGL}\emph{Parametrization of Gorenstein deformations of a lex-segment ideal.} Consider $L=(x^3,x^2y,xy^3,y^5)$. From \Cref{ex:Paramlex} we have
$$\overline{M}=\left(\begin{array}{ccc}
0 & 0 & 0\\
0 & 0 & 0\\
c_{3,1}^0 & 0 & 0\\
c_{4,1}^0 & c_{4,2}^0 & 0
\end{array}\right).$$

\noindent
By \Cref{rkCriteria}, $J=I_3(M)$ is Gorenstein if and only if $c_{3,1}^0c_{4,2}^0\neq 0$. Then the set of Gorenstein ideals $J$ with $\Lt(J)=L$ can be identified with $\mathbb{A}^6_\res\backslash\mathbb{V}(c_{3,1}^0c_{4,2}^0)$.
\end{example}

\begin{corollary}\label{cor:VGL} Let $L$ be a lex-segment ideal. The set $V_G(L)$ of Gorenstein ideals $J$ such that $\Lt(J)=L$ is a quasi-affine variety.
\end{corollary}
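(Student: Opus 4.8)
The plan is to deduce this directly from \Cref{rkCriteria} together with \Cref{ParamLex}, reusing the identification of $V(L)$ with an affine space. By \Cref{ParamLex} (and \Cref{cor:ParamLex}), $V(L)$ is an affine space $\mathbb{A}_\res^{\mathbf{N}}$, where a point corresponds to the tuple of coefficients $c_{i,j}^{k}$ appearing in the entries of the unique matrix $N\in\mathcal{M}(L)$ with $J=I_t(H+N)$. First I would observe that the defining inequality in \Cref{rkCriteria} — namely that each entry $n_{i,i-2}$ on the third main diagonal of $N$ has non-zero constant term — is, under this identification, the single polynomial (in fact monomial) condition $c_{3,1}^{0}\,c_{4,2}^{0}\cdots c_{t+1,t-1}^{0}\neq 0$ on the coordinates of $\mathbb{A}_\res^{\mathbf{N}}$.

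Next I would note that this is precisely the statement that $V_G(L)$ is the complement, inside the affine space $V(L)\cong\mathbb{A}_\res^{\mathbf{N}}$, of the hypersurface $\mathbb{V}\!\left(\prod_{i=3}^{t+1}c_{i,i-2}^{0}\right)$; equivalently, $V_G(L)$ is the distinguished open set $D(g)$ for the regular function $g=\prod_{i=3}^{t+1}c_{i,i-2}^{0}$. A distinguished open subset of an affine space is by definition a quasi-affine variety (indeed it is itself affine), so this completes the argument. One should briefly check that the coordinate $c_{i,i-2}^{0}$ is actually among the free parameters of $\mathcal{M}(L)$, i.e. that the entry $n_{i,i-2}$ is not forced to be zero; this follows from the description of $\mathcal{M}(L)$ recorded after \Cref{ParamLex}, since $i>i-2$ and $v_{i,i-2}=\max(u_{i,i-2},0)=0$ exactly when Gorenstein deformations are admissible, in which case the constant term $c_{i,i-2}^{0}$ is a genuine coordinate.

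I expect no real obstacle here: the content is entirely in \Cref{rkCriteria} and \Cref{ParamLex}, and the corollary is just the translation of "non-vanishing of a product of coordinates" into the language of quasi-affine varieties. The only point requiring a sentence of care is the bookkeeping that the relevant third-diagonal constant terms are honest coordinates on $V(L)$ (so that $g$ is a non-zero regular function and $D(g)$ is non-empty precisely when Gorenstein deformations exist), which is handled by the explicit form of $\mathcal{M}(L)$; if every lex-segment ideal under consideration admits Gorenstein deformations one may state this unconditionally, and otherwise $V_G(L)$ is empty and the claim is vacuous.
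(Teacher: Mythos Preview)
Your proposal is correct and matches the paper's (implicit) argument: the corollary is stated without proof immediately after \Cref{ex:VGL}, and your derivation---identify $V(L)\cong\mathbb{A}_\res^{\mathbf{N}}$ via \Cref{ParamLex}/\Cref{cor:ParamLex} and read off from \Cref{rkCriteria} that $V_G(L)$ is the open complement of $\mathbb{V}\bigl(\prod_i c_{i,i-2}^0\bigr)$---is exactly the content of that example carried out in general. Your extra bookkeeping about when $c_{i,i-2}^0$ is an honest coordinate (equivalently $u_{i,i-2}\le 0$) and the vacuous case $V_G(L)=\varnothing$ is a welcome clarification that the paper relegates to the remark following \Cref{rkCriteria}.
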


\begin{remark}
\Cref{cor:VGL} is a generalization of the procedure given in \cite[Remark 4.7]{RS10} by Rossi and Sharifan to explicitly construct a Gorenstein ring $J$ whose resolution is obtained by consecutive and zero cancellation of the resolution of $L=\Lex(h)$. 
\end{remark}

\bigskip

The parametrization of Gorenstein ideals can also be used to find Gorenstein Artin rings $G=R/J$ that are as close as possible to a given Artin ring $A=R/I$. See \cite{Ana08},\cite{EH18},\cite{EHM20} for more details on this problem.

\begin{definition} We call the Artin Gorenstein ring $G=R/J$ a \textbf{minimal Gorenstein cover} of the Artin ring $A=R/I$ if $J\subset I$ and $\dim_\res G-\dim_\res A$ is minimal among all Artin Gorenstein rings mapping onto $A$. The difference $\dim_\res G-\dim_\res A$ is called the \textbf{Gorenstein colength} of $A$, denoted by $\gcl(A)$.
\end{definition}

Let us show through an example how we can find such Gorenstein covers using the canonical Hilbert-Burch matrices provided by \Cref{ParamLex}: 

\begin{example}\emph{Parametrization of minimal Gorenstein covers of $A=R/I$ arising from a lex-segment ideal.} Consider the ideal $I=(x^3-2xy^2,x^2y-2y^3,y^3)$ with Hilbert function $(1,2,3,1)$. The sequence $h=(1,2,3,2,1)$ corresponds to the Hilbert function of smallest length that admits Gorenstein ideals $J$ where the inclusion $J\subset I$ is possible a priori. The lex-segment ideal associated to $h$ is our running example $L=(x^3,x^2y,xy^3,y^5)$, see \Cref{ex:Paramlex} and \Cref{ex:VGL}. 

On one hand, the inclusion condition $J\subset I$ can be described by a normal form computation of the generators of $J\in V(L)\simeq\mathbb{A}_\res^6$ with respect to a standard basis of $I$. The point $p_J=(c_{3,1}^0,c_{4,1}^0,c_{3,2}^1,c_{4,2}^0,c_{4,2}^1,c_{4,3}^1)\in\mathbb{A}_\res^6$ satisfies the inclusion property if and only if it belongs to the affine variety $\mathbb{V}(-c_{3,1}^0+c_{3,2}^1c_{4,3}^1-c_{4,2}^0+2,c_{3,1}^1+c_{4,3}^1)\subseteq\mathbb{A}_\res^6$. 

On the other hand, $V_G(L)\simeq\mathbb{A}_\res^6\backslash\mathbb{V}(c_{3,1}^0c_{4,2}^0)$. Therefore, $J$ is a Gorenstein cover of $A$ if and only if $p_J\in \mathbb{V}(-c_{3,1}^0+c_{3,2}^1c_{4,3}^1-c_{4,2}^0+2,c_{3,1}^1+c_{4,3}^1)\backslash\mathbb{V}(c_{3,1}^0c_{4,2}^0)$.

For instance, the point $(1,0,0,1,0,0)\in\mathbb{A}_\res^6$ corresponds to the Gorenstein cover $G=R/(x^2y-y^3,x^3-2xy^2)$ of $A$. In particular, we proved that $\gcl(A)=2$.
\end{example}

\begin{corollary}\label{cor:wow} The set of Gorenstein covers of $G=R/J$ of $A=R/I$ that arise from a deformation of a lex-segment ideal $L$, namely $\Lt(J)=L$, is a quasi-affine variety.
\end{corollary}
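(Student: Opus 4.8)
The plan is to combine the three structures already established: the affine-space identification of $V(L)$ from \Cref{ParamLex} (and its coordinate form in \Cref{cor:ParamLex}), the quasi-affine description of the Gorenstein locus $V_G(L)$ from \Cref{cor:VGL}, and the observation, illustrated in the worked example preceding this corollary, that the inclusion condition $J \subset I$ is a closed algebraic condition on the parameters. Concretely, I would first fix the canonical Hilbert-Burch matrix $H$ of $L$ and identify $V(L) \cong \mathbb{A}^{\mathbf N}_\res$ via $J = I_t(H+N) \leftrightarrow p_J$, where the coordinates of $p_J$ are the coefficients $c_{i,j}^v$ of the entries of $N \in \mathcal{M}(L)$, exactly as in \Cref{ex:Paramlex}.

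Next I would argue that the subset $W \subset \mathbb{A}^{\mathbf N}_\res$ of points $p_J$ with $J \subset I$ is Zariski closed. The key step is that, fixing a $\otau$-enhanced standard basis of the given ideal $I$, one has the Grauert/Mora normal form $\NF(\,\cdot\,; I)$, and $J \subset I$ if and only if $\NF(f_k; I) = 0$ for each of the maximal minors $f_k = \det([H+N]_{k+1})$. Each $f_k$ is a polynomial in $x,y$ whose coefficients are polynomials in the parameters $c_{i,j}^v$ (a determinant of a matrix with polynomial entries), and the normal form operation — being $\res$-linear and terminating, since $R/J$ and hence all the relevant truncations are finite-dimensional — produces coefficients that are again polynomial in the $c_{i,j}^v$. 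Vanishing of all these coefficients cuts out $W$ as a closed subvariety of $\mathbb{A}^{\mathbf N}_\res$. I would phrase this uniformly: there is a single finite-dimensional $\res$-vector space (a truncation $R/\m^{s+1}$ for $s$ the socle degree) in which all computations live, so the normal forms depend polynomially on the parameters with no issue of varying the ambient degree.

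Finally, by \Cref{rkCriteria} the Gorenstein locus $V_G(L)$ is the complement in $\mathbb{A}^{\mathbf N}_\res$ of the hypersurface $\mathbb{V}(c_{3,1}^0 c_{4,2}^0 \cdots c_{t+1,t-1}^0)$, i.e.\ a principal open set $D(g)$ with $g = \prod_{i=3}^{t+1} c_{i,i-2}^0$. The set of Gorenstein covers of $A = R/I$ arising from a deformation of $L$ is then exactly $W \cap D(g)$: a closed subvariety of $\mathbb{A}^{\mathbf N}_\res$ intersected with a basic open set, which is quasi-affine. I expect the only genuine obstacle to be the polynomial-dependence claim for the normal form — specifically, checking that the tangent cone / Grauert division algorithm applied to a fixed standard basis of $I$ yields remainders whose coefficients are polynomial (not merely regular or piecewise) functions of the parameters. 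This follows because the leading terms $\Lt(f_k) = x^{t-k}y^{m_k}$ are constant along $V(L)$ (that is the content of \Cref{well}), so the combinatorial pattern of divisions is fixed and only the coefficients vary, linearly at each reduction step; I would make this precise by induction on the (bounded) number of reduction steps needed to reach $\m^{s+1}$.
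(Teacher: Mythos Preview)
Your proposal is correct and follows exactly the approach the paper intends: the paper offers no separate proof of this corollary, treating it as an immediate consequence of the worked example just before it, where $V(L)$ is identified with $\mathbb{A}^{\mathbf N}_\res$ via \Cref{ParamLex}, the inclusion $J\subset I$ is shown to cut out a closed subvariety by normal-form computation, and the Gorenstein locus is the principal open $D\bigl(\prod c_{i,i-2}^0\bigr)$ from \Cref{rkCriteria}. Your additional care about why the normal form depends polynomially on the parameters (fixed leading terms along $V(L)$, hence fixed division pattern, bounded by the socle degree) is more than the paper spells out but is the natural justification of the step it tacitly uses.
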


\begin{remark}
Not all minimal Gorenstein covers $G=R/J$ of $A=R/I$ come from deformations of a lex-segment ideal. Consider the ideal $I=(x^3,xy^2,y^3)$. It can be checked that $J=(x^3,y^3)$ corresponds to a minimal Gorenstein cover of $A$ with Hilbert function $h=(1,2,3,2,1)$. The lex-segment ideal associated to $h$ is $\Lex(h)=(x^3,x^2y,xy^3,y^5)$, however no ideal in $V(\Lex(h))$ will provide a Gorenstein cover of $A$. Indeed, $\Lex(h)$ is not contained in $I$, hence none of its deformations will be. 

Therefore, to find minimal Gorenstein covers we need to look into all cells $V(E)$ such that $\HF_{R/E}=h$.
The reason behind this is that the inclusion condition $J\subset I$ is not preserved after a generic change of coordinates on $J$.  

The surjectivity of \Cref{polysurj} is enough to detect the existence of minimal Gorenstein covers coming from a non-lex-segment ideal $E$ but in order to compute the quasi-projective variety of all minimal Gorenstein covers (see \cite[Theorem 4.2]{EHM20}) we need a parametrization of $V(E)$. 
Examples can be found in \cite{Hom19}.
\end{remark}

\section*{Acknowledgements}
We want to thank Alexandru Constantinescu for suggesting the problem 
and for helpful discussions, as well as for recommending to the second author a stay at the Universit\`a Degli Studi di Genova. The first author wants to thank Joan Elias for encouraging her to do a research stay with Maria Evelina Rossi. We also want to thank Bernd Sturmfels for his advice.

We want to give special thanks to Maria Evelina Rossi for hosting us in Genova, answering many questions, giving useful hints and commenting on several versions of this manuscript, and especially for suggesting us to work on this problem together.

We finally would like to thank the reviewer for many insightful comments that improved the paper.

The first author was partially supported by MTM2016-78881-P, BES-2014-069364 and EEBB-I-18-12915.
Travel of the second author to Genova was supported by MIUR-DAAD Joint
Mobility Program 57267452.
    
\bibliographystyle{plain}
\bibliography{references}
\end{document}